\def\VERSION{15.12.2020: 17:00}
\def\WHO{Ulisse} 
\def\users{us}    
\def\users{world} 
\numberwithin{equation}{section}
\newtheorem{theorem}{Theorem}[section]
\newtheorem{definition}[theorem]{Definition}
\newtheorem{remark}[theorem]{Remark}
\newcommand{\REM}[1]{}
	\newcommand{\REPLACE}[2]{#2}
	\newcommand{\INSERT}[1]{#1}
	\newcommand{\DELETE}[1]{}
	\newcommand{\CHECK}[1]{#1}
        \newcommand{\COMMENT}[1]{}
        \newcommand{\TCOMMENT}[1]{}
\newcommand{\UUU}{\color{black}}
\newcommand{\EEE}{\color{black}}
\newcommand{\AAA}{\color{black}}
\newcommand{\TTT}{\color{black}}
    \newcommand{\MARGINOTE}[1]{}
\definecolor{brown}{rgb}{0.6,0.2,0.2}
\newcommand{\REM}[1]{\marginpar{\bfseries\tiny{\color{blue}#1}}}
\newcommand{\REPLACE}[2]{{\color{brown}\cancel{#1}\,\uwave{#2}\color{black}}}
 \newcommand{\INSERT}[1]{{\color{brown}\uwave{#1}\color{black}}}
 \newcommand{\COMMENT}[1]{{\color{blue}\uuline{#1}\color{black}}} 
 \newcommand{\DELETE}[1]{{\color{brown}\cancel{#1}\color{black}}}
 \newcommand{\CHECK}[1]{{\color{brown}\uwave{#1}\color{black}}}
 \newcommand{\TCOMMENT}[1]{{\color{blue}{ #1}}}
\newcommand{\AAA}{\color{blue}}
\newcommand{\TTT}{\color{red}}
\newcommand{\MARGINOTE}[1]{\marginpar{\color{red}\tiny\texttt{#1}}}
\definecolor{gray}{gray}{0.5}
\newcommand{\R}{\mathbb{R}}
\newcommand{\N}{\mathbb{N}}
\newcommand\DT[1]{\mathchoice
                 {{\buildrel{\hspace*{.1em}\text{\LARGE.}}\over{#1}}}
                 {{\buildrel{\hspace*{.1em}\text{\Large.}}\over{#1}}}
                 {{\buildrel{\hspace*{.1em}\text{\large.}}\over{#1}}}
                 {{\buildrel{\hspace*{.1em}\text{\large.}}\over{#1}}}}
\newcommand\DDT[1]{\mathchoice
   {{\buildrel{\hspace*{.13em}\text{\LARGE.\hspace*{-.13em}.}}\over{#1}}}
   {{\buildrel{\hspace*{.1em}\text{\Large.\hspace*{-.1em}.}}\over{#1}}}
   {{\buildrel{\hspace*{.1em}\text{\large.\hspace*{-.1em}.}}\over{#1}}}
   {{\buildrel{\hspace*{.1em}\text{\large.\hspace*{-.1em}.}}\over{#1}}}}
\renewcommand{\d}{{\rm d}}
\newcommand\FE{\varphi_{_{\rm E}}}
\newcommand\FH{\varphi_{_{\rm H}}}
\newcommand\FG{\varphi_{_{\rm G}}}
\newcommand\PP{\varPi}
\newcommand\Cof{\mathrm{Cof}}
\newcommand{\nablaS}{\nabla_{\scriptscriptstyle\textrm{\hspace*{-.3em}S}}^{}}
\newcommand{\divS}{\mathrm{div}_{\scriptscriptstyle\textrm{\hspace*{-.1em}S}}^{}}
\newcommand{\pG}{p_{\rm G}}
\newcommand{\DD}{{\rm D}}
\begin{document}

\MARGINOTE{possible~journals:
\\
\\
\\or\\
MMS
\\
or
\\
SIMA
\\
or
\\Eur. J. Mech. A/Solids
\\
or
\\
.......???}

\noindent{\LARGE\bf
A note about hardening-free viscoelastic\\[.2em] models
in Maxwellian-type \DELETE{creep }rheologies \\[.2em] at large strains}

\bigskip\bigskip

\noindent{\large\sc  Elisa Davoli}\\
{\it Institute of Analysis and Scientific Computing,
TU Wien,\\Wiedner Hauptstrasse 8--10, \UUU A-1040  Vienna, Austria
}

\bigskip

\noindent{\large\sc Tom\'{a}\v{s} Roub\'\i\v{c}ek}\\
{\it Mathematical Institute, Charles University, \\Sokolovsk\'a 83,
CZ--186~75~Praha~8,  Czech Republic
}\\and\\
{\it Institute of Thermomechanics, Czech Academy of Sciences,\\Dolej\v skova~5,
CZ--182~08 Praha 8,  Czech Republic
}

\bigskip

\noindent{\large\sc Ulisse Stefanelli}\\
{\it Faculty of Mathematics, University of
  Vienna,\\Oskar-Morgenstern-Platz 1, A-1090 Vienna, Austria,
}\\and\\
{\it Vienna Research Platform on Accelerating
  Photoreaction Discovery, University of Vienna,\\W\"ahringerstra\ss e
  17,  A-1090 Vienna,   Austria,}
  \\and\\
{\it 
 Istituto di
  Matematica Applicata e Tecnologie Informatiche {\it E. Magenes},\\via
  Ferrata 1, I-27100 Pavia, Italy.
 }


\bigskip\bigskip

\begin{center}\begin{minipage}[t]{16.5cm}

{\small

\noindent{\bfseries Abstract.}\baselineskip=12pt
 {} Maxwellian-type  rheological models of inelastic effects of
 creep type  at large strains are
revisited 
 in relation to 
inelastic-strain gradient\REPLACE{-terms}{ theories}. In particular,
we \UUU observe \EEE
that a dependence of the stored-energy
density on 
inelastic-strain gradients \UUU may lead \EEE to 
spurious hardening effects, \UUU preventing the model from \EEE 
accommodating large inelastic slips. The main  result  of this paper is
an alternative  inelastic model of creep type,  \UUU where \EEE higher-order energy-contribution
is provided by the  gradients  of the elastic strain\INSERT{ and of
the plastic strain rate}, thus  preventing the onset of spurious
hardening \UUU under large \EEE slips. \EEE  
 The combination of 
Kelvin-Voigt
damping and Maxwellian creep  results in a  Jeffreys-type  rheological model.
Existence of weak solutions is  proved  via  \UUU a \EEE Faedo-Galerkin approximation.


\medskip

\noindent{\it Keywords}: \CHECK{creep at  large  strains,
spurious hardening, \UUU gradient of the elastic strain, \EEE 
weak solutions.
}

\medskip

\noindent{\small{\it AMS Subject Classification}:
35Q74, 
74A30, 
74C20. 
}

} 
\end{minipage}
\end{center}

\bigskip

\section{Introduction}

Inelasticity at large strain has been the focus of an intense research activity for decades, first from the
 engineering community,  see, e.g.,  the monographs
\cite{DunPet05ICP,JirBaz02IAS,Maug92TPF}, and subsequently also from the mathematical point of view (see, e.g., the recent contributions \cite{davoli-kruzik-pelech, melching-neunteufel-schoeberl-stefanelli, kruzik-melching-stefanelli} on large-strain rate-independent processes, incomplete damage, and finite plasticity, respectively as well as the monographs \cite{MieRou15RIST,KruRou19MMCM} and the references therein).

Within the mathematical purview, there is a general agreement that the
rigorous analysis of large-strain inelastic time-evolving phenomena
requires higher-order regularizations of the inelastic strains 
\cite{DavFra15CRFE,GraSte17FPQE,MaiMie09GERI,Miel02FELG,MiRoSa18GERV,MieRou15RIST,MieRou16RIEF,RouSte19FTCS}. 
Existence theories without gradient regularization are available only
in one space dimension \cite{MelSte??WPON}, at the incremental level
\cite{Mielke04,MielkeMueller,compos}, or under stringent modeling
restrictions \cite{kruzik-melching-stefanelli,Mielke04}. 
In  the  engineering literature, on the other hand, gradient
theories  at
large strains are seldom considered, see \cite{Bett05CM,DunPet05ICP,NauAlt07MCSA},
\cite[Ch.~25]{JirBaz02IAS}, \cite[Ch.~8]{Maug92TPF}, and existence of
solutions not in focus.

Gradient theories for the inelastic strain 
introduce  an  internal length-scale in the problem 
related to  the characteristic width
of inelastic slip-bands arising  during creep, damage, or
plastification processes.  The occurrence of such scale is however
not expected to  cause  additional hardening.  
Although sometimes
strain or time hardening are to be considered
\cite{Bett05CM,NauAlt07MCSA}, in many
applications,  inelastic  models are 
ultimately desired not to exhibit any hardening effect during
long-lasting slip deformations. 
In metals, for example, very large irreversible plastification can occur within the phenomenon
sometimes referred to as {\it superplasticity}.  Large slips with
no hardening are particularly common in  
rock, soil, or ice mechanics. Typically, the slip
on tectonic faults can easily accommodate kilometers during millions of years.
 Glaciers flow kilometers,  with hardening only  occurring 
at temperatures  below  $-70^\circ$\,C 
\cite{SchDuv09CFI}.  In a very different context, large
deformations without hardening can be observed in polymers as
well.  

As a result,  one is interested in identifying inelastic
strain-gradient modelizations   guaranteeing, on the one hand,
that the existence of time-evolution of inelastic phenomena is
mathematically well-posed, and on the other hand, that no 
spurious  hardening effects are  generated. The focus of this
paper is hence on 
introducing a novel hardening-free inelastic model of creep-type
allowing for existence of solutions. In order to accomplish this, the
energy of the medium is assumed to contain a term depending on the
gradient of the {\it elastic} strain. This contrasts with usual
approaches based on {\it total} strain-gradient or {\it inelastic}
strain-gradient regularization. Indeed, we \UUU present an example in
Subsection \ref{sec-motiv} below \EEE showing the \AAA possible \EEE effect of such usual
strain-gradient \UUU regularizations \EEE on the onset of spurious hardening. 

 Our new  model is \UUU introduced \EEE in Section~\ref{sec-model}. In
addition to elastic-strain hardening, we assume \UUU the \EEE viscous dissipation to
be quadratic and \UUU to depend \EEE on the gradient of the inelastic-strain
rate. This last gradient term does not affect the hardening-free
nature of the model. 

Eventually, Section~\ref{sec-anal} focuses on the 
existence of weak solutions to the model. The proof relies on a
Faedo-Galerkin approximation,  as well as \UUU on \EEE compactness, and lower semicontinuity
arguments. 
%



\section{A hardening-free viscoelastic model}
\label{sec-model}

\UUU We devote this section to introducing and commenting our modeling
choices. \EEE

Following the classical mathematical theory of inelasticity at 
large  strains  \cite{gurtin,hill, lubliner1}, we assume that the elastic behavior of our
specimen $\varOmega\subset \R^d$, $d=2,3$,
is independent from preexistent inelastic distortions. This can be
rephrased as the assumption that the deformation gradient $F:=\nabla
y$ associated to any deformation $y:\varOmega\to \R^d$ of the body
decomposes into an elastic strain and an inelastic one. For linearized
theories, this decomposition would have an additive nature; in the
setting of large-strain inelasticity, instead, this behavior is
traditionally modeled via a multiplicative decomposition. In the
mathematical literature different constitutive models have been taken
into account, see, e.g., \cite{DavFra15CRFE, GraSte17FPCM,
  GraSte17FPQE, naghdi}  in   the framework of finite
plasticity. We focus here on the classical  multiplicative
decomposition ansatz  \cite{Kron60AKVE,LeeLiu67FSEP}, recently justified in the setting of dislocation systems and crystal plasticity in \cite{conti.reina,conti.reina2}), in which deformations $y\in H^1(\varOmega;\R^d)$ fulfill 
\begin{align}\label{split}
F=F_{\rm el}\PP,
\end{align}
where $F_{\rm el}$ and $\PP$ denote the elastic and inelastic strains,
respectively.

\UUU \subsection{Tensorial notation}\EEE

In the following, we use capital letters to indicate tensors and
tensor-valued functions, independently from their dimensions. For
$A,\, \widehat A, \, \widetilde A
\in \R^{d\times d}$, $B, \,\widehat B\in \R^{d\times d\times d}$,
and  $C,\, \widehat C\in  \R^{d\times d\times d\times d}$ we use the
standard notation  for contractions on two, three, and four indices,
namely, 
\begin{align*}
&A{:}\widehat A = A_{ij} \widehat A_{ij}, \   B{\vdots} \widehat B
= B_{ijk}\widehat B_{ijk},\   (C{:}A)_{ij} = C_{ijkl}A_{kl},\  
(B{:}A)_i =
B_{ijk}A_{jk},\  C{:}{:}\widehat C = C_{ijkl}\widehat C_{ijkl}
\end{align*}
(summation convention
over repeated indices). On the other hand, contraction on one index will
be marked by $\cdot$ only in case of vectors. In particular,
$(CA)_{ijkl}=C_{ijkm}A_{ml}$, $(BA)_{ijk} = B_{ijm}A_{mk}$, etc. 
The symbol $\top$
indicate transposition of two-tensors, namely $A^\top_{ij}=A_{ji}$,
whereas we denote by the superscript ${\rm t}$ the partial transposition of a
four-tensor with respect to the first two indices, namely $C^{\rm
  t}_{ijkl} = C_{jikl}$. For $A \in  \R^{d\times d}$ we
indicate its symmetric part by ${\rm sym}\, A = (A+A^\top)/2$ and, if
$A$ is invertible, use
the shorthand notation $A^{-\top} = (A^{-1})^\top$. We will use the algebra
$A\widehat A{:}\widetilde A=A{:}\widetilde A\widehat A^\top$ and $A{:}\widehat A \widetilde A = \widehat
A^\top A{:}\widetilde A$. 

Let us recall that, for a differentiable function $F: {\mathbb
  R}^{d\times d} \to \R^{d\times d}$ and $A, \, \widehat A \in {\mathbb
  R}^{d\times d} $ we have that $\DD F(A) \in {\mathbb
  R}^{d\times d\times d\times d}$ and $\DD F(A){:}\widehat A = ({\rm d}/{\rm d}
\alpha) F(A +\alpha \widehat A)|_{\alpha=0}$. In particular, one has that
$\DD(A^{-1}){:}\widehat A = - A^{-1}\widehat A A^{-1}$. Moreover, one easily
checks that $\DD(F^\top) = (\DD F)^{\rm t}$, so that one has  that 
$\DD (A^{-\top}){:}\widehat A = - A^{-\top}\widehat A^{\top} A^{-\top}$.
 Given two  other 
differentiable functions $\widehat F: {\mathbb
  R}^{d\times d} \to \R^{d\times d}$ and $f: {\mathbb
  R}^{d\times d}\to \R$ one has  that 
${\rm D} (f\circ F)(A){:}\widehat A = {\rm D} f(F(A)) {:} \DD F(A) {:}\widehat A$ and 
$\DD(\widehat F \circ F)(A) {:}\widehat A =\DD F(\widehat F(A)) {:} \DD \widehat F(A) {:}
\widehat A$.

Let the reference domain $\varOmega \subset \R^d$ be open  and
 with Lipschitz boundary $\varGamma$, and  let $n$ be the 
outward-pointing unit
normal vector at the boundary. For a $m$-tensor valued function $x \in \varOmega
\mapsto A(x)\in (\R^{d})^m $ with $m\geq 1$  
we define the gradient $\nabla A(x) \in  (\R^{d})^{m+1} $ and the
divergence ${\rm div} A(x) \in  (\R^{d})^{m-1} $
componentwise as  
$$\nabla A(x)_{i_1\dots i_m j} = \frac{\partial}{\partial x_j}
A_{i_1\dots i_m}(x),\quad ({\rm div}
A(x))_{i_1\dots i_{m-1}} =  \sum_{j=1}^d\frac{\partial}{\partial
  x_j}A(x)_{i_1\dots i_{m-1} j} .$$ 
For all $x\in\varOmega\mapsto A(x)\in \R^{d\times d}$
and $x\in \varOmega\mapsto\widehat A(x)\in \R^{d\times d}$ we
have that $
  \nabla (A\widehat A) = (\widehat A^\top\nabla A^\top)^{\rm t} + A \nabla
  \widehat A$.
Let now    $x \in \varOmega
\mapsto v(x)\in \R^{d}$, $x \in \varOmega
\mapsto A(x)\in \R^{d\times d}$, and $x \in \varOmega
\mapsto B(x)\in \R^{d\times d \times d}$ be given. Under suitable regularity
assumptions the following Green formulas can be checked 
\begin{subequations}\label{green}
  \begin{align}
    & \int_\varOmega A {:} \nabla v \, \d x = - \int_\varOmega {\rm div} A
    {\cdot} v
    \, \d x + \int_\varGamma (An) {\cdot} v\, \d x\,, \label{eq:green1}\\
    &\int_\varOmega B {\vdots} \nabla A \, \d x = - \int_\varOmega A{:}{\rm
      div} B \, \d x + \int_\varGamma (A{:} B){\cdot} n \,\d x\,. \label{eq:green2}
  \end{align}
\end{subequations}
Eventually, let $\divS$ denote the $(d{-}1)$-dimensional surface divergence
on $\varGamma$. For vector-valued functions $x\mapsto v(x)\in \R^d $ this is defined as
$$\divS v= {\rm tr}\nablaS v \ \ \text{for} \ \  \nablaS v:= \nabla v - \frac{\partial v}{\partial n} \otimes
n\,,$$ 
where ${\rm tr}$ stands for the trace. The same definition will be
used row-wise for tensor-valued functions. We will use the 
\cite[Formula (34)]{Fried-Gurtin06}
\begin{equation}\label{greenS}\int_\varGamma A{:}\nablaS v\, \d S = - \int_\varGamma (\divS
A{\cdot}v + 2\mathfrak{h} A n {\cdot}v) \, \d S\,,
\end{equation}
where $\mathfrak{h}$ stands for the mean curvature of $\varGamma$.
Arguing row-wise, an analogous relation can be checked to hold for tensors-valued
functions as well.

\UUU \subsection{Stored energy} \EEE

Our aim is that of introducing  a hardening-free  inelastic 
model. 
In absence of hardening, the mathematical analysis of
inelastic evolution is notoriously challenging. In order to make the
existence of weak solutions amenable, we include in the model 
higher-order (gradient) effects. 
More specifically, we define
\begin{align}
&\varPhi(y,\PP)=\int_\varOmega\FE(\nabla y\,\PP^{-1}
)
+\FH(\PP)+\FG(\nabla(\nabla y\,\PP^{-1}))
\, \d x\,.
\label{free-energy+}
\end{align}
\UUU Here, \EEE $\FE:\R^{d\times d}
\to [0,\infty)$ \UUU corresponds to \EEE the elastic energy density
\UUU of the medium and will be assumed to be coercive and to control
the sign of ${\rm det} \,F_{\rm el}$, see \eqref{ass-FM} below. On the
other hand, \EEE 
 $\FH:\R^{d\times d}\to [0,\infty]$ plays the role of a constraint
 \UUU on 
${\rm det}\,\PP$. \EEE
\UUU In particular, we are interested in \EEE 
choices of $\FH$ enforcing
the usual isochoric constraint ${\rm det}\PP=1$ in an approximate sense and
keeping ${\rm det}\PP$ away from negative values, \UUU see
\eqref{ass-plast-large-HD-growth} below. \EEE An explicit example
for  such a term is 
\begin{align}
\FH(\PP):=\begin{cases}\displaystyle{
\frac{\delta}{\max(1,\det\PP)^r}
+\frac{(\det\PP-1)^2}{2\delta}}\!\!&\text{ if }\ \det\PP>0,\\
\qquad+\infty&\text{ if }\ \det\PP\ge0\,\end{cases}
\label{FH}\end{align}
with $\delta>0$ small  and $r$ big enough;
cf.\ \cite[Remark 2.6]{RouSte19FTCS}, \cite[Formula
(9.4.36)]{KruRou19MMCM},  or \cite{Neff}.

 Eventually, $\FG:\R^{d\times d}\to[0,\infty)$ 
controls the elastic strain gradient and relates to the length scale of
higher-order effects. Specific assumptions are given in \eqref{ass-G} below.
\UUU In particular, \EEE the stored energy \UUU features \EEE a regularizing
term \AAA depending on \EEE the gradient of the elastic strain $\UUU F_{\rm el} =
\nabla y \PP^{-1}$. \EEE Note however that no gradient
of $\PP$ appears in the energy, for this  might  give rise to hardening,
as explained in \UUU Subsection \ref{sec-motiv} below.\EEE



\UUU \subsection{Spurious hardening from gradients \TTT{in the stored energy}}\label{sec-motiv}

As already mentioned, the analysis of inelastic evolution models calls for
considering inelastic gradient theories. Usual choices in this
direction are terms of the form \EEE
\begin{subequations}\label{Phi}
\begin{align}
&&&&&\ \ \ \
\frac12\kappa|\nabla\PP|^2&&\quad\text{(standard choice)},\label{Phi1}&&&&&&\\
&&&&&\
\frac12\kappa|F^{-\top}\nabla\PP|^2&&\quad\text{(push forward)}, \label{Phi2}\\
&&&&&\frac12\kappa|\nabla(\PP^\top\PP)|^2\, &&\quad\text{(inelastic
metric tensor)}. \label{Phi3}
\end{align}
\end{subequations}
For the {\it standard choice} in \eqref{Phi1}, we refer to 
\cite{GMMM06ANMF,MaiMie09GERI,KruRou19MMCM,MieRou16RIEF} in the context
of plasticity, see also \cite{MiRoSa18GERV} for a more general dependence
on $\nabla\PP$ covering also creep models, as well as \cite{anand} for an
additional scalar-valued internal variable acting as an effective inelastic
strain. The {\it push-forward}  term in \eqref{Phi2}   has been used in
\cite[Remark 9.4.12]{KruRou19MMCM}  and  \cite[Remark 5]{Roub??CHEC}, whereas the
inelastic {\it metric} tensor in \eqref{Phi3} has been
analyzed in \cite{GraSte17FPQE}, cf.\ \cite{NefGhi16CIEP}\CHECK{ for
a throughout discussion and comparison}. 

All  models \eqref{Phi}  however exhibit a drawback: the influence of the inelastic
gradient terms amplifies when inelastic slips evolve and
accommodate large inelastic strains. This, in turn, \AAA might result \EEE in a 
spurious  hardening
effect.

 To demonstrate the presence of a non-autonomous spurious
hardening effect,  we consider $d=2$ and resort to
a stratified situation where $F$ and $\PP$ are constant in the $x_1$ direction,
cf.\ \cite{RouSte19FTCS} or also \cite[Example 9.4.11]{KruRou19MMCM} for
similar examples.
We consider a pure {\it horizontal} shift
of the stripe $\varOmega=\R{\times}[-\ell,\ell]$ 
 driven by time-dependent 
Dirichlet boundary conditions for the displacement
on the sides $\R{\times}\{\pm\ell\}$ 
 and  evolving in a steady-state mode.  In particular, we
assume by symmetry that the deformation has the {\it stratified} form
$$y(x_1,x_2) = (x_1+\UUU f(t,x_2),x_2)$$ 
where the slip via the (unspecified) smooth function \UUU $f
: [0,+\infty) \times [-\ell,\ell] \to \R$ \EEE  fulfills the given Dirichlet
boundary
conditions, say 
\begin{equation}
\UUU f \EEE (t,\pm \ell) = \pm t.\label{bcs}
\end{equation}
We  specify  elastic response by  assuming the material to
be rigid. In particular, the elastic strain $F_{\rm el}$
is assumed to be the identity matrix. In  the setting of plasticity, this would be called a
plastic-rigid model. The corresponding inelastic strain reads then 
\begin{align}\label{P-scaling}
\PP =  F&=\nabla
          y=\bigg(\!\!\begin{array}{cc}1\!&\!\UUU \partial_{x_2}f(t,x_2) \EEE
\\0\!&\!1\end{array}\!\!\bigg)\,.
\end{align}
Let us note that ${\rm det}\,\PP=1$, so that $\FH(\PP)=0$ when 
$\FH$ is defined as in \eqref{FH}.
The arguments in the $\kappa$-term in \eqref{Phi} read  (see
Section \ref{sec-model} for details on the tensorial notation)  then
as

\begin{subequations}\begin{align}
(\nabla\PP)_{ijk}&=
\left\{
  \begin{array}{ll}
   \UUU  \partial_{x_2}^2f(t,x_2) \EEE& \text{for} \ i=1, \, j=2, \, k=2,\\
0&\text{otherwise},
  \end{array}
\right., \\[1mm]
(F^{-\top}\nabla\PP)_{ijk}&=
\left\{
  \begin{array}{ll}
     \UUU  \partial_{x_2}^2f(t,x_2) \EEE& \text{for} \ i=1, \, j=2, \, k=2,\\
 -  \UUU  \partial_{x_2}f(t,x_2) \EEE   \UUU  \partial_{x_2}^2f(t,x_2) \EEE& \text{for} \ i= j= k=2,\\
0&\text{otherwise},
  \end{array}
\right.,\\[1mm]
(\nabla(\PP^\top\PP))_{ijk}&=
\left\{
  \begin{array}{ll}
   \UUU  \partial_{x_2}^2f(t,x_2) \EEE& \text{for} \ i=1, \, j=2, \, k=2,\\
   \UUU  \partial_{x_2}^2f(t,x_2) \EEE& \text{for} \ i=2, \, j=1, \, k=2,\\
 2   \UUU  \partial_{x_2}f(t,x_2) \EEE   \UUU  \partial_{x_2}^2f(t,x_2) \EEE& \text{for} \ i= j= k=2,\\
0&\text{otherwise},
  \end{array}
\right.\,.
\end{align}\end{subequations}
 
\UUU Note that $\partial_{x_2}f(t,x_2)$ necessarily depends on
time. Indeed, if this were not the case one would have that 
$$ \DT f(t,\ell)-\DT f(t,-\ell) =
\int_{-\ell}^\ell\partial_{x_2} \DT f(t,x_2)\, {\rm d} x_2
=0,$$
contradicting the fact that $\DT f(t,\pm\ell)=\pm 1$ from
\eqref{bcs}. Hence, in \EEE all cases, the argument of the quadratic
\UUU terms \EEE 
in \eqref{Phi} \UUU is genuinely time dependent. More precisely, by
taking the mean across the stripe we have that 
$$\frac{1}{2\ell}\int_{-\ell}^{\ell} \partial_{x_2} f(t,x_2)\, {\rm d} x_2 =\frac{1}{2\ell}\left(
f(t,\ell)-f(t,-\ell) \right)\stackrel{\eqref{bcs}}{=}\frac{t}{\ell}$$
so that the terms in \eqref{Phi} would actually be unbounded in time. \EEE
This shows, that no matter how small \UUU the coefficient $\kappa$ is, \EEE
\UUU the \EEE regularizing \UUU terms in \eqref{Phi} grow indefinitely
\EEE under large \EEE slips, \UUU preventing the energy from being bounded
and \EEE 
eventually  corrupting the modelization.  
To compensate for
these spurious hardening-like effects, one could assume $\kappa$ to be 
time dependent,  which would however  lead to an artificially non-autonomous 
model, which is also not desirable.

\UUU In order to avoid this spurious hardening effect while still
retaining regularization, our choice \eqref{free-energy+} for $\varPhi$
above departs from the classical inelastic-gradient regularization
\eqref{Phi} by including the gradient of the elastic
strain $F_{\rm el}$ instead. Note that in the above example the term
$\nabla F_{\rm el}$ vanishes, hence allowing for indefinitely large \UUU
inelastic slips under bounded energy.

Before closing this discussion, let us mention the possibility of
considering the alternative inelastic-gradient terms \EEE 
\begin{equation} 
\frac12 \kappa \big|{\rm curl} \PP\big|^2 \qquad \text{or} \qquad \frac12
\kappa \big|\PP^{-\top}{\rm curl}\PP\big|^2\label{disloc}
\end{equation}
\UUU in the energy $\varPhi$. \EEE
Here, the ${\rm curl}$
of the tensor $\PP$ is taken row-wise in three dimension and is
defined as ${\rm curl}\PP  = (\partial_1 \PP_{12} - \partial_2
\PP_{11}, \partial_1 \PP_{22} - \partial_2 \PP_{21})$ in two
dimensions. These terms correspond to the so-called
{\it dislocation-density} tensor \cite{Cermelli} and have been considered in
\cite{MielkeMueller,Neff,Scala} from the viewpoint of existence of solutions
\TTT of the incremental problems\EEE. In case of \eqref{P-scaling}, the
plastic strain is curl-free and both terms in \eqref{disloc}
vanish. \TTT Therefore these terms exhibit a capability 
\UUU
to accumulate large
\UUU inelastic slips at bounded energy, for they vanish for $\PP$ given by
\eqref{P-scaling}. In particular, \TTT at least in elastically
``well rigid'' materials, \UUU they would not generate the spurious hardening
effect mentioned above. \TTT However, the options \eqref{P-scaling}
do not seem \UUU  to contribute sufficient compactness in order to devise an
existence theory \AAA at the time-continuous level. \EEE
\TTT Of course, combination of some option from \eqref{disloc}
with some option from \eqref{free-energy+} in the stored energy
is possible and yields analytically good compactifying effects but again the
spurious hardening would be involved in the model.
\EEE

\UUU \subsection{Dissipation} \EEE

In order to incorporate inertial effects,
a Kelvin-Voigt-type viscosity needs to be included in the model. 
We consider a
purely linear viscous model by assuming the  dissipation
potential to be
quadratic in terms of rates, namely,
\begin{align}
&{\mathscr R}(y,\PP;\nabla \DT y ,\DT\PP)=
\int_\varOmega\frac{\nu_{\rm m}}2|\DT\PP|^2
+\frac{\nu_{\rm h}}2|\nabla^2\DT\PP|^2+\frac{\nu_{\rm kv}}2|\DT C_{\rm el}|^2
\,\d x\nonumber\\[-.3em]
&\qquad\qquad\qquad\qquad\qquad\text{ with }\ C_{\rm el}=F_{\rm el}^\top F_{\rm el}^{}=
\PP^{-\top}\nabla y^\top \nabla y\PP^{-1},\label{RRR}
\end{align}
where $\nu_{\rm m}$, $\nu_{\rm h}$, and $\nu_{\rm kv}$ are positive
viscous coefficients and
$C_{\rm el}$ is the elastic Cauchy-Green tensor. \UUU In particular,
the Kelvin-Voigt-type viscosity term depends on $\DT C_{\rm el}$ in
order to ensure frame-indifference \cite{Antm98PUVS}. \EEE

 The occurrence of the
$\nabla^2 \DT \PP$ term above is motivated by the need of controlling the rate of
$\PP$ uniformly in space while still 
 avoiding hardening.  In other words, differently from gradient terms acting directly
on $\PP$ (see \UUU Section \ref{sec-model}), \EEE this term provides a regularization
not giving rise to spurious hardening effects,  a
phenomenon which we want to avoid.
This uniform bound in space in turn will
allow the control of the nonlinear terms in \eqref{stresses} as well as of the
inverse $\PP^{-1}$, which is paramount for devising an existence
theory. Henceforth,  following a suggestion by A. Mielke  \cite{Miel??}, we  augment our dissipation potential 
by a regularization provided by the gradient of the creep rate.




The only higher-order terms involving the inelastic strain \UUU hence \EEE occur
in the dissipation and  are  given by 
 the gradient of  the inelastic strain rate,
i.e.\ of $\DT\PP$. 
\UUU With reference to the discussion of Subsection \ref{sec-motiv},
let us point out that such terms may again be time dependent. Still,
they can be expected to show some boundedness with respect to time. In the case
of \eqref{P-scaling} one indeed obtains that the mean across the strip
$$\frac{1}{2\ell}\int_{-\ell}^\ell \DT\Pi(t,x_1,x_2)\, {\rm d}x_2 = \frac{1}{2\ell}\int_{-\ell}^\ell
\bigg(\!\!\begin{array}{cc}0\!&\!\UUU  \partial_{x_2}\DT f(t,x_2) \EEE
\\0\!&\!0\end{array}\!\!\bigg) {\rm d}x_2 =
\bigg(\!\!\begin{array}{cc}0\!&\!\UUU  \frac{\DT f(t,\ell) - \DT f(t,-\ell)}{2\ell} \EEE
\\0\!&\!0\end{array}\!\!\bigg)\stackrel{\eqref{bcs}}{=} \bigg(\!\!\begin{array}{cc}0\!&\!\UUU 1/\ell \EEE
\\0\!&\!0\end{array}\!\!\bigg)$$
is time-independent. A regularization in term of $\nabla \DT \PP$ is
hence not expected to generate 
spurious hardening-like effects. 



\UUU \subsection{Constitutive equations}\EEE

Following the classical {\it Coleman-Noll procedure}
\cite{Coleman-Noll63}, we identify variations of $\varPhi$
with respect to $y$ and $\PP$
as driving
forces in the momentum equation and in the inelastic
flow-rule, respectively. More precisely, we have 
\begin{subequations}\label{stresses}\begin{align}
\delta_y\varPhi(y,\PP)&=-{\rm div}\left(\DD\FE(\nabla y\PP^{-1})\PP^{-\top}
    -{\rm div}\big(\DD\FG(\nabla(\nabla y\PP^{-1}))\big)\PP^{-\top}\right)\,,
\\
\delta_\PP\varPhi(y,\PP)&=\nabla y^\top\DD\FE(\nabla y\PP^{-1}){:}\DD(\PP^{-1})
  \nonumber\\
&\quad+\DD\FH(\PP)
-{\rm div}\big(\DD\FG(\nabla(\nabla y\PP^{-1}))\big){:}\nabla y \DD (\PP^{-1})\,.
\end{align}\end{subequations} 



\UUU In order to consider variations of the dissipation $\mathscr R$,
we start by explicitly computing \EEE   
\begin{align*}
\DT C_{\rm el}&=\PP^{-\top}(\nabla \DT y^\top \nabla y{+}\nabla
y^\top\nabla \DT y)\PP^{-1}
+(\DD(\PP^{-\top}){:}\DT\PP) \nabla y^\top \nabla y\PP^{-1}+\PP^{-\top}\nabla y^\top
  \nabla y\DD(\PP^{-1}){:}\DT\PP\\
&=\PP^{-\top}(\nabla \DT y^\top \nabla y{+}\nabla y^\top\nabla \DT y)\PP^{-1}
- \PP^{-\top}\DT\PP^\top \PP^{-\top} \nabla y^\top \nabla y\PP^{-1}-\PP^{-\top}\nabla y^\top \nabla y\PP^{-1}\DT \PP\PP^{-1}\\
&=\PP^{-\top}( \nabla \DT y^\top \nabla y{+}\nabla y^\top\nabla \DT y)\PP^{-1}
-2 \,{\rm sym}\,(\PP^{-\top}\nabla y^\top \nabla y \PP^{-1} \DT \PP \PP^{-1})\,.
\end{align*}
This Kelvin-Voigt-type viscosity features  then  both $\nabla \DT y$ and $\DT
\PP$ terms. It hence contributes to both the
momentum equation and to the inelastic flow rule.
In particular, setting for brevity $\varSigma:=\nu_{\rm kv} \DT C_{\rm el}$, the contribution of the
Kelvin-Voigt-type viscosity to the stress is given by
\begin{align*}
\delta_{\DT y}\DT C_{\rm el}{:}\varSigma=-{\rm div} \left(2\,{\rm
    sym}\,\big(\PP^{-\top}\nabla y^{\top}\varSigma\PP^{-1}\big) \right).
\end{align*}
On the other hand, by computing 
$$
\DD_{\DT \PP}\DT C_{\rm el}=\big(\PP^{-\top}\nabla y^{\top}\nabla y\,\DD(\PP^{-1})\big)^{\rm t}+
\PP^{-\top}\nabla y^{\top}\nabla y\,\DD(\PP^{-1})
\,,
$$
 we have that the Kelvin-Voigt-type viscous contribution to
 the inelastic driving force is
\begin{align*}
\DD_{\DT \PP}\DT C_{\rm el}{:}\varSigma=-2\,{\rm sym}\,\big(\PP^{-\top}\nabla y^{\top}\nabla y\PP^{-1}\varSigma\PP^{-1}\big)\,.
\end{align*}


\UUU \subsection{Evolution system} \EEE


The evolution of the medium is governed by the system of momentum equation and
the inelastic flow rule. Let us denote by  ${\mathscr T}(\DT y)=\frac12\int_\varOmega\varrho|\DT y|^2\,\d x$ the
kinetic energy and by ${\mathscr F}(t)$ the external load  
$$\langle{\mathscr F}(t),y\rangle=\int_\varOmega f(t){\cdot}y\,\d x
+\int_\varGamma g(t){\cdot}y\,\d S\,$$
where $f$ and $g$ denote a given body force density and surface
traction density, respectively. The system reads then in abstract form
\begin{subequations}\label{abstract}
  \begin{align}
    (\delta_{\DT y}{\mathscr T}(\DT y)){\hspace{-6.5mm} {\phantom{O}}^{\phantom{O}^\text{\LARGE.}}} +
    \delta_{\DT y}{\mathscr R}(y,\PP;\nabla \DT y,\DT\PP)
    +\delta_y\varPhi(y,\PP)&={\mathscr F}(t)\,,\label{abstract1}\\
    \delta_{ \DT\PP}{\mathscr R}(y,\PP;\nabla \DT y ,\DT\PP)
    +\delta_\PP\varPhi(y,\PP)&=0\,. \label{abstract2}
  \end{align}
\end{subequations}

Here, we have formally indicated variations with $\delta$. In the
following, these relations will be made precise in the weak sense, see
\eqref{weak-form}. For the sake of clarity, we present
here the strong form of the system, assuming suitable regularity of
the ingredients.
Owing to our choices
\eqref{free-energy+} and \eqref{RRR} for energy and dissipation, the
latter corresponds to the nonlinear PDE system 
\begin{subequations}\label{evol}
\begin{align}
    &\varrho\DDT y-{\rm div}\Big(\DD\FE(\nabla y\PP^{-1})\PP^{-\top} 
    +2\,{\rm sym}\,\big(\PP^{-\top}\nabla
    y^{\top}\varSigma\PP^{-1}\big) \Big)\nonumber\\
&
\label{evol1}
   \qquad\qquad\qquad\qquad  +{\rm div}\left({\rm
       div}\big(\DD\FG(\nabla(\nabla y\PP^{-1}))\big)\PP^{-\top} \right)
    =f,
 \\[3mm]&\nonumber 
\nu_{\rm m}\DT\PP+{\rm div}^2\big(\nu_{\rm h}\nabla^2\DT\PP\big)
    +\nabla y^\top\DD\FE(\nabla y\PP^{-1}){:}\DD(\PP^{-1})
    \nonumber\\
&\qquad\qquad\qquad\qquad -2\,{\rm sym}\,\big(\PP^{-\top}\nabla y^{\top}\nabla
     y\PP^{-1}\varSigma\PP^{-1}\big)+\DD\FH(\PP)
\nonumber\\&
\label{evol2}
\qquad\qquad\qquad \qquad
-{\rm div}\big(\DD\FG(\nabla(\nabla y\PP^{-1}))\big){:}\nabla y\DD(\PP^{-1})=0
\,,
\end{align}
\end{subequations}
where we have again used the notation
\begin{equation}
\varSigma=
{\nu_{\rm kv}} \DT C_{\rm el} \ \ \text{and} \ \ C_{\rm el}
=\PP^{-\top}\nabla y^\top\nabla y^\top\PP^{-1} \label{evol3}
\,.\end{equation}
Taking into account the formulas \eqref{green}--\eqref{greenS}, system
\eqref{evol} is intended to be completed 
by the following boundary conditions 
\begin{subequations}\label{BC}\begin{align}\nonumber
&\DD\FE(\nabla y\PP^{-1})\PP^{-\top}n
    -{\rm div}\big(\DD\FG(\nabla(\nabla y\PP^{-1}))\big)\PP^{-\top}\big)n  \nonumber 
\\&\hspace*{4em}-\divS\big(\DD\FG(\nabla(\nabla y\PP^{-1})\big)n\PP^{-\top}\big)
-2\mathfrak{h}\left(\DD\FG(\nabla(\nabla y\PP^{-1}))n\PP^{-\top}\right)n\nonumber
\\&\hspace*{4em}
  +2\,{\rm sym}\,\big(\PP^{-\top}\nabla y^{\top}\varSigma\PP^{-1}
  \big){n}=g \,,
\label{BC1}\\ & \big(\DD\FG(\nabla(\nabla y\PP^{-1})\big) \big){:}({n}\otimes({n}\PP^{-1}))=0\,,
\label{BC12}\\ &
\DD\FG(\nabla(\nabla y\PP^{-1})){n}{:}\nabla y\,\DD(\PP^{-1}) -{\rm div} \nu_{\rm h}\nabla^2\DT\PP n
\nonumber\\& \hspace*{4em}
-\divS(\nu_{\rm h}\nabla^2\DT\PP{n}) - 2 \nu_{\rm h}\mathfrak{h}(\nabla^2\DT\PP{n})n =0\,,\label{BC21}
\\& 
\nu_{\rm h}\nabla^2\DT \PP{:}({n}\otimes{n})=0\,.
\label{BC22} \end{align}\end{subequations}

The energetics of the model can be obtained by 
  formally testing \eqref{evol1} with $\DT y$ under
\eqref{BC1}--\eqref{BC12} and \eqref{evol2} with $\DT\PP$  under \eqref{BC21}--\eqref{BC22}.
 By considering the initial conditions
\begin{align}\label{IC}
y(0)=y_0,\ \ \ \DT y_0=v_0,\ \ \ \PP(0)=\PP_0\,,
\end{align}
the resulting energy balance on the time interval $[0,t]$ is
\begin{align}\nonumber
&\int_\varOmega\frac{\rho}{2} |\DT y(t)|^2
+
\FE(\nabla y(t)\PP^{-1}(t))
+
\FG(\nabla(\nabla y(t)\PP^{-1}(t)))
+
\FH(\PP(t))\,\d x
\\\nonumber
&\quad+\int_0^t\!\!\int_\varOmega
\nu_{\rm kv}|\DT C_{\rm el}|^2
+
\nu_{\rm m}|\DT\PP|^2 +\nu_{\rm h}|\nabla^2\DT \PP|^2
\,\d x\,d\tau
=\int_0^t\!\!\int_\varOmega f{\cdot}\DT y\,\d x\,d\tau
+\int_0^t\!\!\int_\varGamma g{\cdot}\DT y\,\d S\,d\tau
\\\label{energy}
&\qquad\qquad
+\int_\varOmega 
\frac{\rho}{2} |\DT y_0|^2
+\FE (\nabla y_0\PP^{-1}_0)
+
\FG(\nabla (\nabla y_0 \PP^{-1}_0))
+
\FH(\PP_0)\,\d x
\,.
\end{align}
%
%
In particular, the sum of total energy at time $t$ and dissipated energy on
$[0,t]$ equals the sum of initial total energy and work of external forces. 

\begin{remark}[{\sl Nonlinear or activated creep}]\label{rem-nonlin}\upshape
We assume here the dissipation potential to be quadratic, which makes
the occurrence of $\DT \PP$ in \eqref{evol2} linear. In order to
generalize this to the nonlinear
(or even activated) 
case, the analysis of the problem would require to check strong
compactness for the approximations of $\varSigma$. This  seems
presently out of reach in our setting. 
where only a weak convergence for such approximants can be guaranteed, cf.\ 
\eqref{first2} below. 
%
\end{remark}


\begin{remark}[{\sl  Jeffreys  rheology}]\label{rem-Jeffrey}\upshape
The combination of two viscous damping mechanisms and one elastic
energy-storing
mechanism is often referred to as {\it  Jeffreys  rheology} \cite{KruRou19MMCM}
(sometimes also called {\it anti-Zener}
rheology). This combination may arise from two different arrangements of
rheological elements: one can arrange a Stokes viscous element either in
parallel with a Maxwell
rheological element or in series with a
Kelvin-Voigt rheological one. Recall that a Maxwell (resp.\ Kelvin-Voigt)
rheological element is an arrangement of an elastic and a viscous element in series
(resp.\ in parallel). At small strains, the two possible
arrangements giving a  Jeffreys  rheology  
are equivalent, cf.\ \cite[Formula (6.6.34)]{KruRou19MMCM}.
On the contrary, equivalence does not hold at large strains. In our model we follow the
second variant: the viscous Stokes element is in series with a
Kelvin-Voigt rheological element. The reader is referred to \cite[Remark 9.4.4]{KruRou19MMCM},
for a model following the first variant instead, which allows for a
simpler analysis in spite of a somehow lesser physical
relevance.  
\end{remark}

\section{Analysis of the model
}\label{sec-anal}
 In the following we
use the standard notation $C(\cdot)$ for the space of continuous 
functions, 
$L^p$ for Lebesgue  spaces, and $W^{k,p}$ for Sobolev spaces whose 
$k$-th distributional derivatives are in $L^p$. Moreover, we use the 
abbreviation $H^k=W^{k,2}$ and,  for all $p\geq 1$, we let the conjugate
exponent $p'=p/(p{-}1)$  (with $p'=\infty$ if $p=1$), and use
the  notation 
$p^*$ for the Sobolev exponent $p^*=pd/(d{-}p)$ for $p<d$,
$p^*<\infty$ for $p=d$, and $p^*=\infty$ for $p>d$.
Thus, $W^{1,p}(\varOmega)\subset L^{p^*}\!(\varOmega)$ or 
$L^{{p^*}'}\!(\varOmega)\subset (W^{1,p}(\varOmega))^*$=\,the dual to $W^{1,p}(\varOmega)$. 

Given the fixed time interval $I=[0,T]$, we denote by $L^p(I;X)$ the 
standard Bochner space of Bochner-measurable mappings $u: I\to X$, where
$X$ is a Banach space. Moreover,  $W^{k,p}(I;X)$ denotes the Banach space of 
mappings in  $L^p(I;X)$ whose $k$-th distributional derivative in time is 
also in $L^p(I;X)$.

Let us list here the assumptions on the data which
are used in the following:
\begin{subequations}\label{ass}
\begin{align}\nonumber
&\FE
:\R^{d\times d}\to[0,+\infty]\ \text{ continuously differentiable
  on }\ {\rm GL}^+(d),  \ \exists\,\epsilon>0, \  p_{\rm G}\in
  (d,2^*), \ r> p_{\rm G}d/(p_{\rm G}-d), 
\\
&\qquad\FE(F_{\rm el})\ge\begin{cases}\epsilon
/(\det F_{\rm el})^r\!\!\!&\text{if }\ \det F_{\rm el}>0,
\\[-.2em]\quad+\infty&\text{if }\ \det F_{\rm el}\le0,\end{cases}\ \ \ \ 
,
\label{ass-FM}
\qquad
\\[-.2em]\nonumber
&\FH:\R^{d\times d}\to[0,+\infty]\ \text{ continuously differentiable
  on }\ {\rm GL}^+(d),  \ \exists\,\epsilon>0, \  s > 2^*d/(2^*-d),
   \\
&\qquad\FH(\PP)\ge\begin{cases}\epsilon
/(\det\PP)^s\!\!\!&\text{if }\ \det\PP>0,
\label{ass-plast-large-HD-growth}
\\[-.2em]\quad+\infty&\text{if }\ \det\PP\le0,\end{cases}\ \ \ \ 
,
\\[-.2em]\nonumber
&\FG:\R^{d\times d\times d}\to[0,+\infty)\ \text{ convex, continuously differentiable}, \ \exists\,\epsilon>0,
\\[-.2em]\nonumber
&\qquad\forall G,\widetilde G\in \R^{d\times d\times d}:\ \ \ (\DD\FG(G){-}\DD\FG(\widetilde G))\vdots(G{-}\widetilde G)\ge\epsilon|G{-}\widetilde G|^{\pG}
\\[-.2em]
&\hspace{11em}\FG(G)\ge\epsilon|G|^{\pG}\,,\ \ \
|\DD\FG(G)|\le(1+|G|^{\pG-1})/\epsilon\,,
\label{ass-G}\\
\label{ass-M-K}&\varrho>0,\ \ \nu_{\rm m},\nu_{\rm kv},\nu_{\rm h}>0,
\\&\nonumber
y_0\!\in\! W^{2,\pG}
(\varOmega)^d,\ \ 
v_0\!\in\! L^2(\varOmega)^d,\ \ 
\PP_0\!\in\! H^2
(\varOmega)^{d\times d},\ \ 
\\&\qquad
\label{ass-IC}\FE(\nabla y_0\PP_0^{-1})\!\in\!L^1(\varOmega),\ \ \FH(\PP_0)\!\in\!L^1(\varOmega),
\\&\label{ass-load}
f\in L^1(I;L^2(\varOmega)^d)+L^2(I;L^1(\varOmega)^d),\ \ \ g\in
L^2(I;L^1(\varGamma)^d).
\end{align}\end{subequations}
 A prototypical choice  
for $\FG$ satisfying \eqref{ass-G} is $\FG(\cdot)=|\cdot|^{\pG}$.
The restriction $\pG<2^*$ will be instrumental for estimates \eqref{nabla-Fel-strongly}
and  \eqref{first5} below.

The definition of weak solutions follows directly from system
\eqref{abstract}. It can be recovered by formally testing 
both equations in \eqref{evol}
by smooth functions and use Green formulas \eqref{green} together with
the surface Green formula \eqref{greenS}, the boundary conditions \eqref{BC}, and multiple
by-part integration in time, keeping into account the
initial conditions \eqref{IC}. 
Altogether,
we arrive at the following definition.


\begin{definition}[Weak formulation of 
\eqref{evol}  with  \eqref{BC}-\eqref{IC}]\label{def}
The pair 
$(y,\PP)$ satisfying
\begin{subequations}\label{weak-form-}\begin{align}\nonumber
&y\in L^\infty(I;W^{2,\pG}(\varOmega)^d)\cap H^1(I;L^2(\varOmega)^d)
\ \ \text{ with }\ \ \nabla y^\top\nabla y\in H^1(I;L^2(\varOmega)^{d\times d})\,,
\\[-.2em]&\qquad\qquad\varSigma\in L^2(I{\times}\varOmega)^{d\times d},\quad
\det\nabla y>0\,,\ \ \text{ and }\ \ \
 \frac1{\det\nabla y}\in L^\infty(I{\times}\varOmega)\,,
\label{weak-form-y}
\ \ \text{ and}
\\[-.2em]\label{weak-form-Pi}
 &\PP\in H^1(I;H^2(\varOmega)^{d\times d})
 \ \ \text{ with }\ \ \det\PP>0\ \ \text{ and }\ \ \
 \frac1{\det\PP}\in L^\infty(I{\times}\varOmega)
\end{align}\end{subequations}
is called a \emph{weak solution} to the initial-boundary-value problem
\eqref{evol}, \eqref{BC}--\eqref{IC} if the following two identities
hold with $\varSigma$ from \eqref{evol3}:
\begin{itemize}
\item[\rm (i)]
The  \emph{weak formulation of  the momentum balance} \eqref{evol1} 
 with the boundary conditions \eqref{BC1}--\eqref{BC12} and first two
 initial conditions in
 \eqref{IC}
 \begin{subequations}
\label{weak-form}\begin{align}\nonumber
&
\int_0^T\!\!\int_\varOmega\Big(\DD\FE(\nabla y\PP^{-1}){:}(\nabla\widetilde y\,\PP^{-1})
+\varrho y{\cdot}\DDT{\widetilde y}
+2\,{\rm sym}\,\big(\PP^{-\top}\nabla y^{\top}\varSigma\PP^{-1}\big):\nabla\widetilde y
\\[-.4em]&\nonumber\qquad\qquad
+\DD\FG(\nabla(\nabla y\PP^{-1})){\vdots}\nabla(\nabla\widetilde y\PP^{-1})
\Big)\,\d x\, \d t
=\int_0^T\!\!\int_\varOmega\!
f{\cdot}\widetilde y\,\d x\, \d t
\\[-.4em]&\qquad\qquad\qquad\qquad+\int_0^T\!\!\int_{\varGamma}\!
g{\cdot}\widetilde y\,\d S\, \d t
+\int_\varOmega\!\varrho
v_0{\cdot}\widetilde y(0)-\varrho y_0{\cdot}\DT{\widetilde y}(0)\,\d x
\label{momentum-weak}\end{align}
holds for any $\widetilde y$ smooth with $\widetilde y(T)=\DT{\widetilde y}(T) =0$.
\item[\rm (ii)] The \emph{weak formulation of the
creep flow rule} \eqref{evol2} with the boundary conditions \eqref{BC21}--\eqref{BC22} and the
last initial condition in \eqref{IC}
\begin{align}\nonumber
&\int_0^T\!\!\int_\varOmega\Big(
\nabla y^\top\DD\FE(\nabla y\PP^{-1}){:}\DD(\PP^{-1})+\DD\FH(\PP)
-2\,{\rm sym}\,\big(\PP^{-\top}\nabla y^{\top}\nabla y\PP^{-1}\varSigma\PP^{-1}\big)
\Big)
{:}{\widetilde\PP}
\\[-.5em]&\nonumber\qquad
-
\nu_{\rm m}\PP:\DT{\widetilde\PP}
+\DD\FG(\nabla(\nabla y\PP^{-1}))\vdots\nabla\big(\nabla y\DD(\PP^{-1}){:}{\widetilde\PP}\big)
-\nu_{\rm h}\nabla^2\PP\vdots\nabla^2\DT{\widetilde\PP}
\,\d x\, \d t
\\[-.0em]&\qquad\qquad=\int_\varOmega
\nu_{\rm m}\PP_0{:}{\widetilde\PP}(0)
+\nu_{\rm h}\nabla^2\PP_0\vdots\nabla^2{\widetilde\PP}(0)\,\d x
\label{weak-form-P}
\end{align}\end{subequations}
holds for any ${\widetilde\PP}$ smooth with ${\widetilde\PP}(T)=0$.
\end{itemize}
\end{definition}

Let us note that, due to \eqref{weak-form-Pi},
we have
also $\PP^{-1}=\Cof\PP^\top/\det\PP
\in L^\infty(I{\times}\varOmega)^{d\times d}$, \CHECK{as well as $\DD\FG(\nabla(\nabla y\PP^{-1}))\in L^\infty(I;L^{\pG'}(\varOmega)^{d\times d\times d})$} 
so that
all integrands in \eqref{weak-form} are well-defined as $L^1$-functions.



 Our main analytical result is an existence theorem for weak
solutions.  This is to  be seen as a mathematical consistency property of
the proposed model. It reads as follows. 

\begin{theorem}[Existence of weak solutions]\label{thm}
Let the assumptions \eqref{ass} hold. Then,  there exists  a weak solution 
$(y,\PP)$ in the sense of Definition~{\rm {\ref{def}}}. 

\end{theorem}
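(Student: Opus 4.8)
The plan is to construct weak solutions by a Faedo--Galerkin scheme: discretize in space, solve the resulting ODE system, derive uniform a~priori bounds from the energy balance, and pass to the limit by compactness, lower semicontinuity, and a monotonicity (Minty-type) argument for the higher-order term $\FG$. Concretely, I would fix nested finite-dimensional subspaces $V_1\subset V_2\subset\cdots\subset W^{2,\pG}(\varOmega)^d$ and $W_1\subset W_2\subset\cdots\subset H^2(\varOmega)^{d\times d}$ with dense unions, and project the initial data so that $y_{0,k}\to y_0$ in $W^{2,\pG}$, $v_{0,k}\to v_0$ in $L^2$, $\PP_{0,k}\to\PP_0$ in $H^2$. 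Since $W^{2,\pG}(\varOmega)\hookrightarrow C^1(\overline\varOmega)$ for $\pG>d$ and $H^2(\varOmega)\hookrightarrow C(\overline\varOmega)$ for $d\le3$, and since $\FE(\nabla y_0\PP_0^{-1}),\FH(\PP_0)\in L^1$ force $\det\nabla y_0$ and $\det\PP_0$ to be continuous and everywhere positive, one also gets $\det\nabla y_{0,k},\det\PP_{0,k}\ge c>0$ and convergence of the initial energies. Solving \eqref{momentum-weak} and \eqref{weak-form-P} restricted to test functions in $V_k$, $W_k$ is a quasilinear second-order ODE system for the coefficients; as $\mathscr R$ is quadratic and $\varrho,\nu_{\rm m},\nu_{\rm h},\nu_{\rm kv}>0$, the flow rule can be solved for $\DT\PP_k$ (its coefficient being $\nu_{\rm m}\,\mathrm{Id}$ plus a positive-semidefinite contribution from $|\DT C_{\rm el}|^2$), whence the momentum equation has invertible principal coefficient $\varrho$ on $\DDT y_k$; the right-hand side is continuous in the coefficients as long as $\det\PP_k>0$ and $\det\nabla y_k>0$ (recall $\DD\FE,\DD\FH$ are continuous only on $\mathrm{GL}^+(d)$), so Peano's theorem yields a local solution and continuity propagates the determinant positivity on a short interval.

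Testing the discrete momentum equation by $\DT y_k$ and the discrete flow rule by $\DT\PP_k$ and adding yields the discrete counterpart of the energy balance \eqref{energy}. Grönwall's lemma, using \eqref{ass-load} (the work of tractions being controlled via the bound on $\DT y_k$, a generalized Korn inequality with the uniformly elliptic, equicontinuous field $F_{\rm el,k}$, and by-parts integration in time) gives uniform bounds: $\DT y_k$ in $L^\infty(I;L^2)\cap L^2(I;H^1)$; $\FE(F_{\rm el,k}),\FG(\nabla F_{\rm el,k}),\FH(\PP_k)$ in $L^\infty(I;L^1)$; $\DT C_{\rm el,k}$ in $L^2(I\times\varOmega)$; $\DT\PP_k$ in $L^2(I;H^2)$, so $\PP_k$ bounded in $H^1(I;H^2)\hookrightarrow C(\overline I;C^{0,\alpha}(\overline\varOmega))$. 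From \eqref{ass-G} (and fixing the rigid-motion gauge of $y_k$ by a Poincar\'e-type argument) $F_{\rm el,k}$ is bounded in $L^\infty(I;W^{1,\pG})\hookrightarrow L^\infty(I;C^{0,1-d/\pG}(\overline\varOmega))$, while \eqref{ass-FM}, \eqref{ass-plast-large-HD-growth} bound $(\det F_{\rm el,k})^{-1}$, $(\det\PP_k)^{-1}$ in $L^\infty(I;L^r)$, $L^\infty(I;L^s)$. The key point is that a function with a uniform modulus of H\"older continuity whose negative power is bounded in $L^1$ cannot approach $0$: quantitatively $\det F_{\rm el,k}\ge c>0$ and $\det\PP_k\ge c_0>0$ uniformly, and this is exactly where the restrictions $r>\pG d/(\pG-d)$ and $s>2^*d/(2^*-d)$ are used, since they match the H\"older exponents of $W^{1,\pG}\hookrightarrow C^{0,\alpha}$ and $H^2\hookrightarrow C^{0,\alpha}$. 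Hence $F_{\rm el,k}^{-1},\PP_k^{-1}$ are bounded in $L^\infty(I\times\varOmega)$, $\nabla y_k=F_{\rm el,k}\PP_k$ is bounded in $L^\infty(I;W^{1,\pG})$ so $y_k$ is bounded in $L^\infty(I;W^{2,\pG})$, and the arguments $F_{\rm el,k},\nabla y_k,\PP_k$ take values in a fixed compact subset of $\mathrm{GL}^+(d)$; thus $\DD\FE(F_{\rm el,k}),\DD\FH(\PP_k)$ are bounded in $L^\infty$ (no growth hypothesis needed) and $\DD\FG(\nabla F_{\rm el,k})$ is bounded in $L^\infty(I;L^{\pG'})$ by \eqref{ass-G}; reinserting into the momentum equation bounds $\DDT y_k$ in $L^2(I;(W^{2,\pG}(\varOmega))^*)$. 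All bounds being uniform, the standard continuation argument makes the Galerkin solution global on $\overline I$.

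By Aubin--Lions--Simon, along a subsequence $y_k\to y$ in $C(\overline I;C^1(\overline\varOmega))$, $\DT y_k\to\DT y$ in $L^2(I\times\varOmega)$ and weakly-$*$ in $L^\infty(I;L^2)\cap L^2(I;H^1)$, $\PP_k\to\PP$ in $C(\overline I;C^0(\overline\varOmega))$ with $\DT\PP_k\rightharpoonup\DT\PP$ in $L^2(I;H^2)$, and $\varSigma_k\rightharpoonup\varSigma$ in $L^2(I\times\varOmega)$. The uniform convergences of $\nabla y_k$, $\PP_k$, $\PP_k^{-1}$, $F_{\rm el,k}=\nabla y_k\PP_k^{-1}$ and $C_{\rm el,k}=F_{\rm el,k}^\top F_{\rm el,k}$ (together with $\det F_{\rm el,k},\det\PP_k$ bounded away from $0$) let every ``coefficient'' nonlinearity in \eqref{momentum-weak}--\eqref{weak-form-P} pass to the limit — in particular the Kelvin--Voigt viscous terms, which are products of uniformly convergent factors with the weakly convergent $\varSigma_k$ — and identify $\varSigma=\nu_{\rm kv}\DT C_{\rm el}$, $\nabla y^\top\nabla y\in H^1(I;L^2)$, as well as $\det\nabla y,\det\PP>0$ with $1/\det\nabla y,1/\det\PP\in L^\infty$, i.e.\ the regularity \eqref{weak-form-}.

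The remaining difficulty, and the crux of the proof, is the higher-order terms $\iint\DD\FG(\nabla F_{\rm el,k})\vdots(\cdots)$, because $\nabla F_{\rm el,k}\rightharpoonup\nabla F_{\rm el}$ only weakly in $L^{\pG}(I\times\varOmega)$ while $\DD\FG$ is nonlinear (and no second gradient of $F_{\rm el}$ is controlled, so compactness of $\nabla F_{\rm el,k}$ is not for free). I would first pass to the limit in the momentum equation with the weak limit $\chi$ of $\DD\FG(\nabla F_{\rm el,k})$ in $L^{\pG'}$ in place of $\DD\FG(\nabla F_{\rm el})$, and then identify $\chi=\DD\FG(\nabla F_{\rm el})$ by a Minty-type argument using the strict monotonicity in \eqref{ass-G}, which simultaneously upgrades $\nabla F_{\rm el,k}\to\nabla F_{\rm el}$ to strong convergence in $L^{\pG}(I\times\varOmega)$. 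The needed estimate $\limsup_k\iint\DD\FG(\nabla F_{\rm el,k})\vdots\nabla F_{\rm el,k}\le\iint\chi\vdots\nabla F_{\rm el}$ is extracted from the discrete energy balance, compared with the energy inequality for the limit obtained from it by weak lower semicontinuity of the dissipation, kinetic and elastic terms (note $\FE(F_{\rm el,k}),\FH(\PP_k)$ actually converge strongly); managing this somewhat circular argument — where the restriction $\pG<2^*$ enters so that the $\PP_k^{-1}$-dependent factors multiplying $\DD\FG$ (involving $\nabla\PP_k^{-1}\in L^\infty(I;L^{2^*})$) remain in $L^{\pG}$ — is the main technical obstacle. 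Once $\nabla F_{\rm el,k}\to\nabla F_{\rm el}$ strongly, $\DD\FG(\nabla F_{\rm el,k})\to\DD\FG(\nabla F_{\rm el})$ in $L^{\pG'}$ and all $\FG$-terms in both \eqref{momentum-weak} and \eqref{weak-form-P} converge; the $C(\overline I;\cdot)$-convergences make $(y,\PP)$ attain the initial data \eqref{IC}. The limit pair $(y,\PP)$ thus satisfies Definition~\ref{def} and is the sought weak solution.
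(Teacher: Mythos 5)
Your overall strategy---Galerkin discretization, discrete energy balance with Gr\"onwall, Healey--Kr\"omer lower bounds on $\det F_{\rm el,k}$ and $\det\PP_k$, the Neff--Pompe generalized Korn inequality for the $\nabla\DT y_k$ bound, Aubin--Lions compactness, and a monotonicity argument based on the uniform convexity of $\FG$ to recover the nonlinear higher-order term---is precisely the paper's strategy, and all the preparatory estimates you list are the ones the paper derives in \eqref{est}.

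Where your plan does not close is exactly the step you flag as ``the main technical obstacle'': producing the limsup estimate $\limsup_k\iint\DD\FG(\nabla F_{{\rm el},k})\,\vdots\,\nabla F_{{\rm el},k}\le\iint\chi\,\vdots\,\nabla F_{\rm el}$. You propose to extract it by comparing the discrete energy balance with a limit energy inequality. But the energy balance \eqref{energy} controls $\int_\varOmega\FG(\nabla F_{{\rm el},k})\,\d x$ as an \emph{endpoint evaluation}, not the space-time duality pairing of $\DD\FG(\nabla F_{{\rm el},k})$ against $\nabla F_{{\rm el},k}$; relating the two requires the chain rule $\partial_t\FG(\nabla F_{\rm el})=\DD\FG(\nabla F_{\rm el})\vdots\partial_t\nabla F_{\rm el}$ for the limit, which presupposes having already identified $\chi=\DD\FG(\nabla F_{\rm el})$. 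That circularity is not resolved in your sketch. The paper avoids it entirely by a direct computation: it uses the uniform monotonicity in \eqref{ass-G} to bound $\epsilon\|\nabla F_{{\rm el},k}-\nabla F_{\rm el}\|_{L^{\pG}}^{\pG}$ by $I_{1,k}+I_{2,k}+I_{3,k}$, with $I_{2,k},I_{3,k}\to0$ by the established strong/weak convergences, and shows $I_{1,k}\to0$ by testing the discrete momentum balance \eqref{momentum-weak-k} by $y_k-\widetilde y_k$, where $\widetilde y_k$ is a Galerkin-space approximant of the already-extracted weak limit $y$ (with one integration by parts in time, using $\DT y_k\to\DT y$ strongly in $L^2(I\times\varOmega)$ and $\DT y_k(T)\rightharpoonup\DT y(T)$). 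This gives strong convergence $\nabla F_{{\rm el},k}\to\nabla F_{\rm el}$ in $L^{\pG}(I\times\varOmega)$ in one stroke, after which passage to the limit in both equations is routine. You should replace the energy-comparison Minty route by this test-function argument; without it, the identification of the $\FG$-terms in \eqref{momentum-weak} and \eqref{weak-form-P} is not justified.

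Two smaller remarks: the $L^\infty(I;L^2)$ bound on $y_k$ follows directly from the bound on $\DT y_k$ and the initial condition, so no Poincar\'e/gauge-fixing is needed; and the Galerkin subspaces should be taken in $W^{2,\infty}(\varOmega)$ (as the paper does) rather than merely $W^{2,\pG}$ and $H^2$, so that all nonlinear terms in \eqref{weak-form-k} are well defined at the discrete level.
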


\begin{proof}
As we are working in reference (Lagrangian) coordinates and aim at 
testing by partial derivatives in time, we can advantageously
use the Galerkin discretisation method in space. Let us fix a nested sequence
of finite-dimensional subspaces $V_k\subset W^{2,\infty}(\varOmega)$,
$k\in\N$ whose union is dense in $W^{2,\infty}(\varOmega)$.
We will use this
sequence for  all components of deformations $y$ and inelastic strains
$\PP$.  

Without loss of generality, we may consider an approximation of the initial
conditions $y_{0,k}\in V_k^d$, $v_{0,k}\in V_k^d$, and $\PP_{0,k}\in V_k^{d\times d}$
such that 
\begin{subequations}\label{IC-approx}\begin{align}
&&&&&y_{0,k}\to y_0&&\text{strongly in }W^{2,\pG}(\varOmega)^d,&&&&&&&&
\\
&&&&&v_{0,k}\to v_0&&\text{strongly in }L^2(\varOmega)^d,
\\
&&&&&\PP_{0,k}\to\PP_0&&\text{strongly in }H^2(\varOmega)^{d\times d}.
\end{align}\end{subequations}
Existence of a finite-dimensional approximate solution
$(y_k,\PP_k)\in W^{2,1}(I;V_k^d)\times C^1(I;V_k^{d\times d})$
of the initial-value problem for the system of nonlinear ordinary 
differential \CHECK{equations} 
arising from the Galerkin approximation is
standard, also using successive prolongation
based on uniform $L^\infty$ estimates. Such estimates can be obtained
by testing the discrete-in-space equations by 
$\DT y_k$ and $\DT\PP_k$. This leads to 
the energy balance \eqref{energy} for the Galerkin approximations
$(y_k,\PP_k)$. Starting from the energy balance, by using the Gronwall and H\"older inequalities,
we obtain a-priori estimates independently of $k$, namely,
\begin{subequations}\label{est}\begin{align}\label{est1}
&\{y_k\}_{k\in\N}^{}\ \ \text{ is bounded in }\ W^{1,\infty}(I;L^2(\varOmega)^d),
\\\label{est2}
&\{\PP_k\}_{k\in\N}^{}\ \ \text{ is bounded in }\  H^1(I;H^2(\varOmega)^{d\times d})\subset
L^\infty(I{\times}\varOmega)^{d\times d}
,\\\label{est3}
&\{F_{{\rm el},k}\}_{k\in\N}^{}=\{\nabla y_k \PP^{-1}_k\}_{k\in\N}^{}\ \
  \text{ is bounded in }\  L^\infty(I;W^{1,\pG}(\varOmega)^{d\times d}),\\\label{est4}
&\{C_{{\rm el},k}\}_{k\in\N}^{}=\{F_{{\rm el},k}^\top F_{\rm
  el,k}\}_{k\in\N}^{}\ \ \text{ is bounded in }\  H^1(I;L^2(\varOmega)^{d\times d})
.
%
\intertext{Next, we use the 
classical Healey-Kr\"omer \cite{HeaKro09IWSS} argument, here
applied to the plastic strain 
instead of the deformation gradient, as already exploited in
\cite{RouSte19FTCS}. 
This is based on the $L^\infty$-bound of $\PP_k$
and on the sufficiently fast blow-up of $\FH$, as assumed in
\eqref{ass-plast-large-HD-growth}. It is important that the argument in
\cite{HeaKro09IWSS} holds even for the discrete level
(as realized already in \cite{KruRou19MMCM,MieRou20TKVR})
and ensures that $\det\PP_k\ge\delta$ for all time instants and for
some $\delta>0$ independent of $k$. In particular, we also have that}
&\label{P-1-bound}
\{\PP_k^{-1}\}_{k\in\N}^{}\ \ \text{ is bounded in }\ L^\infty(I\times \varOmega)^{d\times d}.
\intertext{From (\ref{est2})--(\ref{est3}) we get that
$\{\nabla y_k\}_{k\in\N}^{}=\{F_{{\rm el},k}\PP_k\}_{k\in\N}^{}$ is bounded in
$L^\infty(I\times \varOmega)^{d\times d\times d}$.
From \eqref{est2} we find that $\nabla(\nabla y_k\PP_k^{-1})
=
(\PP_k^{-\top}\nabla(\nabla y_k)^\top)^{\rm t}+
\nabla y_k\DD(\PP_k^{-1})\nabla\PP_k
$ is bounded in 
$L^\infty(I;L^{\pG}(\varOmega)^{d\times d\times d})$. This in particular implies that}
&
\{
\nabla(\nabla y_k)^\top\}_{k\in\N}^{}=\Big\{\PP_k^\top\Big(\nabla(\nabla y_k\PP_k^{-1}){-}\nabla y_k\DD(\PP_k^{-1})\nabla\PP_k\Big)^{\rm t}\Big\}_{k\in\N}^{}\nonumber\\
&\qquad\qquad\qquad\qquad\qquad\text{ is bounded in }\ L^\infty(I;L^{\pG}(\varOmega)^{d\times d\times d}).\label{est_last}
\intertext{From \eqref{est1}, we know that $\{y_k\}_{k\in\N}^{}$ is bounded in
$L^\infty(I;L^2(\varOmega)^d)$, so that 
\eqref{est_last}
 yields  
a bound in $L^\infty(I;W^{2,\pG}(\varOmega)^d)$.
We proceed by showing that \eqref{est4}, yields the estimate}
&\{\nabla\DT y_k\}_{k\in\N}^{}
\text{ is bounded in }\ L^2(I{\times}\varOmega)^{d\times d}. \label{est-of-DT-nabla-y}
\end{align}
\end{subequations}

To prove \eqref{est-of-DT-nabla-y} we argue as in \cite[Sect.\,9.4.3]{KruRou19MMCM}. We preliminary observe that by the growth conditions from below on $\FE$ in \eqref{ass-FM}, as well as by the super-quadratic growth on $\FG$ in \eqref{ass-G}, the Healey-Kr\"omer argument yields the existence of $\delta_{\rm el}>0$ such that

$${\rm det}\, F_{{\rm el},k}\geq \delta_{\rm el}\quad\text{ in }I\times \varOmega$$
for every $k\in \mathbb{N}$. By combining  the  Cauchy-Binet formula with the bound in \eqref{P-1-bound}, we find that 
$$
\frac1{\det\nabla y_k}=\frac1{\det(\nabla y_k\PP_k^{-1}\PP_k)}
=\frac1{\det(\nabla y_k\PP_k^{-1})}\frac1{\det\PP_k}\,
$$
is uniformly bounded in $L^\infty(I\times \varOmega)$. Property \eqref{est-of-DT-nabla-y} follows now by applying  
the generalized Korn inequality by Neff \cite{Neff02KFIN}
and Pompe \cite{Pomp03KFIV} as exploited for the Kelvin-Voigt rheology in
\cite[Thm.\,3.3]{MieRou20TKVR}. 

For all $k \in {\mathbb N}$ the pair $(y_k,\PP_k)$ fulfills the weak formulation
\eqref{weak-form}
with initial conditions approximated as \eqref{IC-approx}, 
provided that the test-functions take value in the finite-dimensional
space. In particular, we
have 
 \begin{subequations}\label{weak-form-k}\begin{align}\nonumber
&
\int_0^T\!\!\int_\varOmega\Big(\DD\FE(\nabla y_k\PP^{-1}_k){:}(\nabla\widetilde y_k\,\PP^{-1}_k)
+\varrho y_k{\cdot}\DDT{\widetilde y}_k
+2\,{\rm sym}\,\big(\PP^{-\top}_k\nabla y_k^{\top}\varSigma_k\PP_k^{-1}\big){:}\nabla\widetilde y_k
\\[-.4em]&\nonumber\qquad
+\DD\FG(\nabla(\nabla y_k\PP_k^{-1})){\vdots}\nabla(\nabla\widetilde y_k\PP^{-1}_k)
\Big)\,\d x\, \d t 
=\int_0^T\!\!\int_\varOmega\!
f{\cdot}\widetilde y_k\,\d x\, \d t
\\[-.4em]&\qquad\qquad+\int_0^T\!\!\int_{\varGamma}\!
g{\cdot}\widetilde y_k\,\d S\, \d t
+\int_\varOmega\!\varrho
v_0{\cdot}\widetilde y_k(0)-\varrho y_0{\cdot}\DT{\widetilde y}_k(0)\,\d x
\label{momentum-weak-k}\\\nonumber
&\int_0^T\!\!\int_\varOmega\Big(
\nabla y^\top_k\DD\FE(\nabla y_k\PP^{-1}_k){:}\DD(\PP^{-1}_k)+\DD\FH(\PP_k)
-2\,{\rm sym}\,\big(\PP^{-\top}_k\nabla y^{\top}_k\nabla y_k\PP^{-1}_k\varSigma_k\PP^{-1}_k\big)
\Big)
{:}{\widetilde\PP_k}
\\[-.5em]&\nonumber\qquad
- 
\nu_{\rm m}\PP_k{:}\DT{\widetilde\PP}_k
+\DD\FG(\nabla(\nabla y_k\PP^{-1}_k)){\vdots}\nabla\big(\nabla y_k\DD(\PP^{-1}_k):{\widetilde\PP}_k\big)
-\nu_{\rm h}\nabla^2\PP_k{\vdots}\nabla^2\DT{\widetilde\PP}_k
\,\d x\, \d t
\\[-.0em]&\qquad\qquad=\int_\varOmega
\nu_{\rm m}\PP_0{:}{\widetilde\PP}_k(0)
+\nu_{\rm h}\nabla^2\PP_0{\vdots}\nabla^2{\widetilde\PP}_k(0)\,\d x
\label{weak-form-P-k}
\end{align}\end{subequations}
for all $\widetilde y_k\in C^2(I;V_k^d)$
and $\widetilde\PP_k\in C^1(I;V_k^{d\times d})$ with   $\widetilde
y_k(T)=\DT{\widetilde y}_k(T) =0$ and $\PP_k(T)=0$.


We are hence ready to address the convergence $\{(y_k,\PP_k)\}_{k \in
  {\mathbb N}}$ as $k\to\infty$. By the Banach selection principle and
the Aubin-Lions compact-embedding theorem, we select a not
relabeled subsequence 
converging with respect to the weak* topologies indicated in 
\eqref{est}. In particular, we have that 
\begin{subequations}\label{conv}
\begin{align}
&y_k \to y \quad \text{weakly* in}\ W^{1,\infty}(I;L^2(\varOmega)^d)\cap
  L^\infty(I;W^{2,\pG}(\varOmega)^d) \nonumber\\
&\qquad\qquad\qquad\qquad\qquad\qquad\qquad 
\text{and strongly in}\ C(I{\times}\bar\varOmega)^d 
, \label{conv1}
\\
\label{conv2}
&\PP_k\to \PP \quad \text{weakly in}\  H^1(I;H^2(\varOmega)^{d\times d}) \ \text{and strongly in}\ L^\infty(I{\times}\varOmega)^{d\times d},
\\
\label{conv3}
&\PP_k^{-1}\to \PP^{-1} \quad   \text{strongly in}\ L^\infty(I{\times}\varOmega)^{d\times d},\\
\label{conv4}
&F_{{\rm el},k}=\nabla y_k \PP^{-1}_k\to F_{\rm el} =\nabla y
  \PP^{-1}\quad \text{weakly* in}\ L^\infty(I;W^{1,\pG}(\varOmega)^{d\times
  d}),\\
\label{conv5}
&C_{{\rm el},k} =F_{{\rm el},k}^\top F_{\rm
  el,k} \to C_{\rm el} =F_{\rm el}^\top F_{\rm
  el} \quad \text{weakly in}\  H^1(I;L^2(\varOmega)^{d\times d}).
\end{align}
\end{subequations}
In fact, using the Aubin-Lions theorem in the context of Galerkin method
when the time derivatives are estimated only in  some  locally convex space
(or alternatively only their Hahn-Banach extension is estimated in a Banach
space) requires some attention, as commented in
\cite[Sect.8.4]{Roub13NPDE}. The convergence of $\PP^{-1}_k$ is
obtained by exploiting the formula $\PP^{-1}_k = {\rm Cof}
\PP_k^\top/{\rm det} \PP_k$, 
as well as the uniform lower bound ${\rm det}\,\PP_k\geq\delta$, and the fact that the determinant is a locally Lipschitz function. By recalling that $\DD(\PP^{-1}){:}A = - \PP^{-1}A \PP^{-1}$
for all $A \in \R^{d\times d}$
one readily checks that 
\begin{align}
  \label{strong2}
&  \DD(\PP^{-1}_k) \to \DD(\PP^{-1}) \ \ \text{strongly in} \ \
  L^\infty(I{\times}\varOmega)^{d\times d\times d\times d },\\
&\nabla (\PP^{-1}_k)= \DD(\PP^{-1}_k){:}\nabla \PP_k \to
  \DD(\PP^{-1}){:}\nabla \PP = \nabla (\PP^{-1}) \nonumber\\
&\label{strong3}\qquad\qquad\qquad \text{strongly in} \ \
  L^q(I{\times}\varOmega)^{d\times d\times d } \quad \forall
  q<2^*.
\end{align}
We further proceed by proving that
\begin{align}
  \label{strong4}
\nabla (\nabla y_k^{}\PP^{-1}_k)\to\nabla (\nabla y\PP^{-1})\qquad\qquad\qquad \text{strongly in} \ \
L^{\pG}(I{\times}\varOmega)^{d\times d\times d}.
\end{align}
By the uniform monotonicity
of $\DD\FG$, we find:
\begin{align}\nonumber
&\nonumber\epsilon
\big\|\nabla(\nabla y_k\PP^{-1}_k)
-
\nabla (\nabla y\PP^{-1})
\big\|_{L^{\pG}(I{\times}\varOmega)^{d\times d\times d}}^{\pG}
\\
&\nonumber\quad\le
\int_0^T\!\!\!\int_\varOmega
\big(\DD\FG(\nabla(\nabla y_k\PP^{-1}_k))-\DD\FG(\nabla(\nabla y\PP^{-1}))\big)\vdots
\big(\nabla(\nabla y_k\PP^{-1}_k)-\nabla(\nabla y\PP^{-1}))\big)\,\d x\,\d t
\\
&\nonumber\quad=
\int_0^T\!\!\!\int_\varOmega
\DD\FG(\nabla(\nabla y_k\PP_k^{-1}))\vdots\nabla (\nabla(y_k{-}y)\PP^{-1}_k)\,\d x\,\d t
\\[-.2em]&\nonumber\quad\qquad+
\int_0^T\!\!\!\int_{\varOmega}
\DD\FG(\nabla(\nabla y_k\PP_k^{-1}))\vdots\nabla(\nabla y(\PP^{-1}_k{-}\PP^{-1}))\,\d x\, \d t
\\[-.2em]&\nonumber\quad\qquad\qquad
-
\int_0^T\!\!\!\int_{\varOmega}\!\DD\FG(\nabla (\nabla y\PP^{-1}))\vdots\big(\nabla (\nabla y_k\PP^{-1}_k){-}\nabla (\nabla y\PP^{-1})\big)\,\d x\d t
\ =:\ I_{1,k}+I_{2,k}+I_{3,k}\,.
\end{align}
where $\epsilon>0$ is from \eqref{ass-G}.
We have $I_{3,k}\to0$ owing to \eqref{conv4} and to
$\DD\FG(\nabla (\nabla y\PP^{-1}))\in L^\infty(I;L^{\pG'}(\varOmega)^{d\times d\times d})$
because of the growth assumption in \eqref{ass-G}. Also $I_{2,k}\to0$ since 
\begin{align}
\nabla (\nabla y(\PP^{-1}_k{-}\PP^{-1}))
=
((\PP^{-\top}_k{-}\PP^{-\top})\nabla(\nabla y)^\top)^{\rm t}
+\nabla y(\nabla\PP^{-1}_k{-}\nabla\PP^{-1})
\to0
\label{nabla-Fel-strongly}\end{align}
strongly in $L^{\pG}(I{\times}\varOmega)^{d\times d\times d}$ by \eqref{conv3};
here we  also used the convergence  $\nabla\PP^{-1}_k\to\nabla\PP^{-1}$ strongly in
$L^{\pG}(I{\times}\varOmega)^{d\times d\times d}$ owing to
\eqref{conv2}. 
To prove that also the term $I_{1,k}$ converges to 0, we test the momentum equation
for the Galerkin approximants by $y_k{-}\widetilde y_k$
where $\widetilde y_k$ is an approximation of the limit $y$ which takes values
in the finite-dimensional subspaces $V_k^d$ and which converges to $y$
in $L^2(I;W^{2,\pG}(\varOmega)^d)\cap H^1(I;L^2(\varOmega)^d)$.
We further assume
$\widetilde y_k(0)=y_{0,k}$.
Note that here $y_k{-}\widetilde{y}_k$ is not
$C^2(I;H^2(\Omega)^d)$ but rather
$W^{1,\infty}(I;H^2(\Omega)^d)$. Nevertheless, this regularity is
enough for arguing differently from \eqref{momentum-weak-k} and
integrating by-part in time only once. Note also that
$y_k(T)-\widetilde{y}_k(T)\neq 0$. For this reason, a further 
term  
at time $T$ appears in the equation below. Altogether, 
\begin{align}\nonumber
I_{1,k}&=
\int_0^T\!\!\int_\varOmega
\DD\FG(\nabla(\nabla y_k\PP_k^{-1}))\vdots
(\nabla (\nabla(\widetilde y_k{-}y)\PP^{-1}_k))
+\varrho\DT y_k{\cdot}(\DT y_k{-}\DT{\widetilde y}_k)
+f{\cdot}(y_k{-}\widetilde y_k)
\\[-.3em]&\nonumber\qquad\quad
-\DD\FE(\nabla y_k\PP^{-1}_k){:}\nabla(y_k{-}\widetilde y_k)\PP^{-1}_k)
-2\,{\rm sym}\,\big(\PP^{-\top}_k\nabla y_k^{\top}\varSigma_k\PP_k^{-1}\big){:}\nabla(y_k{-}\widetilde y_k)\,\d x\, \d t 
\\[-.1em]&\nonumber\qquad\qquad\qquad\qquad
-\int_\varOmega
\DT y_k(T){\cdot}(y_k(T){-}\widetilde y_k(T))\,\d x
+\int_0^T\!\!\int_{\varGamma}\! g{\cdot}(y_k{-}\widetilde y_k)\,\d S\, \d t\to0\,.
\end{align}

Then, from \eqref{est-of-DT-nabla-y}, using (the above mentioned generalization
of) the Aubin-Lions theorem, exploiting an information about $\DDT y_k$ obtained
via a comparison argument in the discrete variant of \eqref{evol1} for the
Galerkin approximants, we infer that
$$
\DT y_k\to \DT y\quad\text{ strongly in }L^2(I\times \varOmega)^d,
$$
and
$$
\DT y_k(T)\to \DT y(T)\quad\text{ weakly in }L^2( \varOmega )^d.
$$
By \eqref{est2}, \eqref{conv3}, \eqref{conv4}, and \eqref{conv5} we conclude that $I_{1,k}\to 0$ and obtain \eqref{strong4}.

What it is left to prove is that $(y,\PP)$ is a weak solution in the
sense of Definition \ref{def}. Let $\widetilde y$ and $\widetilde \PP$ be
smooth with $\widetilde y(T)=\DT{\widetilde y}(T)=0$ and $\widetilde{\PP}(T)=0$,
and approximate them via sequences
$\widetilde{y}_k$ and $\widetilde{\PP}_k$ as in \eqref{weak-form-k}, so that
$\widetilde{y}_k \to\widetilde y$ strongly in
$H^2(I;W^{2,\pG}(\varOmega)^d)$ and $\widetilde{\PP}_k \to  \widetilde{\PP}$ strongly in
$H^1(I;H^2(\varOmega)^d)$. One needs to
check that convergences \eqref{conv} are sufficient to pass to the
limit in all terms in \eqref{weak-form}. Let us start by the
momentum balance \eqref{momentum-weak-k}. By 
the continuity of the superposition operator
we have that 
\begin{align}
&\DD\FE(\nabla y_k\PP_k^{-1})\PP^{-\top}_k\to\DD\FE(\nabla y\PP^{-1}) \PP^{-\top}\text{ strongly in}
 \ \ L^\infty(I{\times}\varOmega)^{d\times d}\,,
\label{first1}
\end{align} 
cf.\ the growth condition \eqref{ass-FM}. Estimate
\eqref{est4} ensures that 
\begin{equation}\label{first2}\varSigma_k=\nu_{\rm kv}\DT C_{{\rm el},k}
\to \varSigma \ \ \text{weakly in} \ L^2(I{\times}\varOmega)^{d\times d}.
\end{equation} 
The limit $\varSigma$ can be identified
as $\varSigma=\nu_{\rm kv}\DT C_{\rm el}$ since we have convergence \eqref{conv5}. Owing to \eqref{first2}, \eqref{est_last}, and \eqref{conv3} we 
deduce that 
\begin{align}
&\PP^{-\top}_k \nabla y_k^\top   \varSigma_k
  \PP^{-1}_k \to  \PP^{-\top}\nabla y^\top  \varSigma
  \PP^{-1} \ \ \text{weakly in} \
L^2(I{\times}\varOmega)^{d\times d}\,. 
\label{first3}
\end{align}
Let us now compute 
\begin{align*}
\DD\FG(\nabla (\nabla y_k \PP^{-1}_k)){\vdots} \nabla (\nabla \widetilde y_k
\PP^{-1}_k)&= \DD\FG(\nabla (\nabla y_k \PP^{-1}_k)){\vdots}
(\PP^{-\top}_k\nabla(\nabla\widetilde y_k)^\top)^{\rm t}
\\[-.3em]&\ \ \ +\DD\FG(\nabla (\nabla y_k \PP^{-1}_k)){\vdots}  (\nabla \widetilde
  y_k \DD(
\PP^{-1}_k){:}\nabla \PP_k)
\end{align*}
Convergences \eqref{conv} 
suffice to pass to the weak limit in both terms
in the right-hand side.
In fact, taking into account \eqref{strong2} and \eqref{strong4}, we
have the following strong convergences (even though weak ones 
would  be enough
for our existence proof):
\begin{align}
 & \DD\FG(\nabla(\nabla y_k\PP^{-1}_k)){\vdots}
(\PP^{-\top}_k\nabla(\nabla\widetilde y_k)^\top)^{\rm t}\to
\DD\FG(\nabla (\nabla y \PP^{-1})){\vdots}
(\PP^{-\top}\nabla(\nabla\widetilde y)^\top)^{\rm t}
\nonumber\\
&\hspace*{14em}  \text{
in} \
  L^p(I;L^{\pG'}(\varOmega)^{d\times d})\qquad  \forall p<+\infty\,,\ \text{ and}
\label{first4}\\
& \DD\FG(\nabla (\nabla y_k \PP^{-1}_k)){\vdots}  (\nabla \widetilde
  y_k \,\DD(
\PP^{-1}_k){:}\nabla \PP_k)\to \DD\FG(\nabla (\nabla y \PP^{-1})){\vdots}  (\nabla \widetilde
  y \,\DD(
\PP^{-1}){:}\nabla \PP)  \nonumber\\
&\hspace*{14em} \text{
in} \
  L^p(I;L^q(\varOmega)^{d\times d})\qquad  \forall p<
  +\infty, \ q<\frac{2^*\pG'}{2^*{+}\pG'}.\label{first5} 
\end{align}

Since all  the  remaining terms in the momentum balance
\eqref{momentum-weak-k} are linear, convergences
\eqref{first1}--\eqref{first5} allow to pass to the limit and obtain \eqref{momentum-weak}.

Let us now move to the flow rule \eqref{weak-form-P-k}. Arguing
as above, by \eqref{est_last}
we have that 
\begin{align}
& \nabla y^\top_k \DD \FE(\nabla y_k \PP^{-1}_k){:}\DD (\PP^{-1}_k)
  \to \nabla y^\top \DD \FE(\nabla y \PP^{-1}){:}\DD (\PP^{-1})
  \nonumber\\
&\qquad \ \text{ strongly in } L^\infty(I{\times}\varOmega)^{d\times d}\,.
  \label{second1}  
\end{align} 
By using again convergence \eqref{first2} we also get that 
\begin{align}
& \PP^{-\top}_k \nabla y_k^\top \nabla y_k \PP^{-1}_k \varSigma_k
  \PP^{-1}_k \to  \PP^{-\top}\nabla y^\top \nabla y \PP^{-1}\varSigma
  \PP^{-1}\quad \text{weakly in} \
L^2(I{\times}\varOmega)^{d\times d}\,.
  \label{second2}
\end{align}
Eventually, we use convergences \eqref{conv1}, \eqref{strong3}, and \eqref{strong4} in
order to check that 
\begin{align*}
 &\DD\FG(\nabla(\nabla y_k\PP^{-1}_k)){\vdots}\nabla\big(\nabla
  y_k\DD(\PP^{-1}_k){:}{\widetilde\PP}_k\big) =
  -\DD\FG(\nabla(\nabla y_k\PP^{-1}_k)){\vdots}\nabla\big(\nabla
  y_k \PP^{-1}_k {\widetilde\PP}_k \PP^{-1}_k \big)\nonumber\\
&= - \DD\FG(\nabla(\nabla y_k\PP^{-1}_k)){\vdots}\big(
[(\widetilde{\PP}_k\PP_k^{-1})^{\top}\nabla(\nabla y_k\PP^{-1}_k)^t]^t+ \nabla y_k\PP^{-1}_k\nabla (\widetilde{\PP}_k\PP_k^{-1})\big)\nonumber\\
&\qquad \to \DD\FG(\nabla(\nabla y\PP^{-1})){\vdots}\nabla\big(\nabla
  y\DD(\PP^{-1}){:}{\widetilde\PP}\big) \ \ \text{strongly in} \
  L^1(I{\times}\varOmega)^{d\times d\times d}.
\end{align*}
All remaining terms in the flow rule
\eqref{weak-form-P-k} are linear and convergences
\eqref{second1}--\eqref{second2} suffice to pass to the limit and
obtain \eqref{weak-form-P}.
\end{proof}

\section*{Acknowledgments}
This research has been partially supported also from the
CSF (Czech Science Foundation) project 19-04956S, 
the M\v SMT \v CR
(Ministry of Education of the Czech Rep.) project
CZ.02.1.01/0.0/0.0/15-003/0000493,
the Austria Science Fund (FWF) projects F\,65, I\,2375,
P\,27052, I\,4052, and V\,662, and by the Vienna Science and Technology Fund (WWTF)
through Project MA14-009
as well as from BMBWF through the OeAD-WTZ project CZ04/2019
and the institutional support RVO: 61388998 (\v CR).
Besides, T.R.\ is thankful for the hospitality and support of the University
of Vienna. 

\bibliographystyle{alpha}

\begin{thebibliography}{10}

\bibitem{anand}
L.~Anand, O.~Aslan, and S.A. Chester.
\newblock A large-deformation gradient theory for elastic-plastic materials:
  Strain softening and regularization of shear bands.
\newblock {\it Int. J. Plast.}, 30:116--143, 2012.

\bibitem{Antm98PUVS}
S.S. Antman.
\newblock Physically unacceptable viscous stresses.
\newblock {\it Z. Angew. Math. Phys.}, 49:980--988, 1998.

\bibitem{Bett05CM}
J.~Betten.
\newblock {\it Creep Mechanics}.
\newblock Springer, Berlin, 2nd edition, 2005.


\bibitem{Cermelli}
P.~Cermelli and M.~Gurtin. On the characterization of geometrically
necessary dislocations in finite plasticity. {\it
J. Mech. Phys. Solids}, 49:1539--1568, 2001.


\bibitem{Coleman-Noll63}
B.D. Coleman and W.~Noll.
\newblock The thermodynamics of elastic materials with heat conduction and
  viscosity.
\newblock {\it Arch. Rational Mech. Anal.}, 13:167--178, 1963.

\bibitem{conti.reina}
S.~Conti and C.~Reina.
\newblock Kinematic description of crystal plasticity in the finite kinematic
  framework: a micromechanical understanding of {$F=F_e F_p$}.
\newblock {\it J. Mech. Phys. Solids}, 67:40--61, 2014.

\bibitem{conti.reina2}
S.~Conti, C.~Reina, and A.~Schl\"omerkemper.
\newblock Derivation of {$F=F_e F_p$} as the continuum limit of crystalline
  slip.
\newblock {\it J. Mech. Phys. Solids}, 89:231--254, 2016.

\bibitem{DavFra15CRFE}
E.~Davoli and G.A. Francfort.
\newblock A critical revisiting of finite elasto-plasticity.
\newblock {\it SIAM J. Math. Anal.}, 47:526--565, 2015.

\bibitem{davoli-kruzik-pelech}
E.~Davoli, M.~Kru\v{z}\'ik, and P.~Pelech.
\newblock Separately global solutions to rate-independent processes in
  large-strain inelasticity.
\newblock {\it ArXiv preprint 2008.02244}, 2020.

\bibitem{DunPet05ICP}
F.~Dunne and N.~Petrinic.
\newblock {\it Introduction to Computational Plasticity}.
\newblock Oxford Univ. Press Inc., New York, 2005.

\bibitem{Fried-Gurtin06}
E.~Fried and M.E. Gurtin.
\newblock Tractions, balances, and boundary conditions for nonsimple materials
  with application to liquid flow at small-length scales.
\newblock {\it Arch. Ration. Mech. Anal.}, 182(3):513--554, 2006.

\bibitem{GraSte17FPCM}
D.~Grandi and U.~Stefanelli.
\newblock Finite plasticity in ${P}^\top{P}$. {Part I}: constitutive model.
\newblock {\it Cont. Mech. Thermodyn.}, 29:97--116, 2017.

\bibitem{GraSte17FPQE}
D.~Grandi and U.~Stefanelli.
\newblock Finite plasticity in ${P}^\top{P}$. {Part II}: quasistatic evolution
  and linearization.
\newblock {\it SIAM J. Math. Anal.}, 49:1356--1384, 2017.

\bibitem{GMMM06ANMF}
E.~G\"urses, A.~Mainik, C.~Miehe, and A.~Mielke.
\newblock Analytical and numerical methods for finite-strain elastoplasticity.
\newblock In R.~Helmig, A.~Mielke, and B.I. Wohlmuth, editors, {\it Multifield
  Problems in Solid and Fluid Mechanics}, pages 443--481. Springer, Berlin,
  2006.

\bibitem{gurtin}
M.E. Gurtin.
\newblock {\it An Introduction to Continuum Mechanics}.
\newblock Mathematics in Science and Engineering, 158th edn., Academic Press
  Inc., New York, 1981.

\bibitem{HeaKro09IWSS}
T.J. Healey and S.~Kr\"omer.
\newblock Injective weak solutions in second-gradient nonlinear elasticity.
\newblock {\it ESAIM Control Optim. Calc. Var.}, 15:863--871, 2009.

\bibitem{hill}
R.~Hill.
\newblock {\it The mathematical theory of plasticity}.
\newblock Clarendon Press, Oxford, 1950.

\bibitem{JirBaz02IAS}
M.~Jir\'{a}sek and Z.P. Ba\v{z}ant.
\newblock {\it Inelastic Analysis of Structures}.
\newblock J. Wiley, Chichester, 2002.

\bibitem{Kron60AKVE}
E.~Kr\"{o}ner.
\newblock Allgemeine {K}ontinuumstheorie der {V}ersetzungen und
  {E}igenspannungen.
\newblock {\it Arch. Rational Mech. Anal.}, 4:273--334, 1960.

\bibitem{kruzik-melching-stefanelli}
M.~Kru\v{z}\'ik, D.~Melching, and U.~Stefanelli.
\newblock Quasistatic evolution for dislocation-free finite plasticity.
\newblock {\it ESAIM Control Optim. Calc. Var.}, appeared online, 2020.
\newblock DOI: 10.1051/cocv/2020031.

\bibitem{KruRou19MMCM}
M.~Kru\v{z}\'{\i}k and T.~Roub\'\i\v{c}ek.
\newblock {\it Mathematical Methods in Continuum Mechanics of Solids}.
\newblock Sringer, Switzerland, 2019.

\bibitem{LeeLiu67FSEP}
E.~Lee and D.~Liu.
\newblock Finite-strain elastic-plastic theory with application to plain-wave
  analysis.
\newblock {\it J. Applied Phys.}, 38:19--27, 1967.

\bibitem{lubliner1}
J.~Lubliner.
\newblock {\it Plasticity theory}.
\newblock Macmillan Publ., New York, 1990.

\bibitem{MaiMie09GERI}
A. Mainik and A. Mielke.
\newblock Global existence for rate-independent gradient plasticity at finite
  strain.
\newblock {\it J. Nonlinear Sci.}, 19:221--248, 2009.

\bibitem{Maug92TPF}
G.A. Maugin.
\newblock {\it The Thermomechanics of Plasticity and Fracture}.
\newblock Cambridge Univ. Press, Cambridge, 1992.

\bibitem{melching-neunteufel-schoeberl-stefanelli}
D.~Melching, M.~Neunteufel, J.~Sch{\"o}eberl, and U.~Stefanelli.
\newblock A finite-strain model for incomplete damage in elastoplastic
  materials.
\newblock {\it Comput. Methods Appl. Mech. Engrg.}, to appear, 2020.

\bibitem{MelSte??WPON}
D.~Melching and U.~Stefanelli.
\newblock Well-posedness of a one-dimensional nonlinear kinematic hardening
  model.
\newblock {\it Discrete Cont. Dynam. Syst. Ser. S}, 13, 2020.

\bibitem{Miel??}
A.~Mielke.
\newblock Personal communication, 2017.

\bibitem{Miel02FELG}
A.~Mielke.
\newblock Finite elastoplasticity, {L}ie groups and geodesics on {S}{L}$(d)$.
\newblock In P.~Newton, A.~Weinstein, and P.~J. Holmes, editors, {\it Geometry,
  Mechanics, and Dynamics}, pages 61--90. Springer--Verlag, New York,
2002.


\bibitem{Mielke04}
A.~Mielke. Existence of minimizers in incremental elasto-plasticity
with finite strains. {\it SIAM J. Math. Anal.}, 36:384--404, 2004.

\bibitem{MielkeMueller}
A.~Mielke and S.~M\"uller. Lower semicontinuity and existence of
minimizers for a functional in elastoplasticity. {\it Z. Angew. Math. Phys.}, 86:233--250, 2006.


\bibitem{MiRoSa18GERV}
A.~Mielke, R.~Rossi, and G.~Savar\'e.
\newblock Global existence results for viscoplasticity at finite strain.
\newblock {\it Arch. Ration. Mech. Anal.}, 227, 2018.

\bibitem{MieRou15RIST}
A.~Mielke and T.~Roub{\'\i}{\v{c}}ek.
\newblock {\it Rate-Independent Systems -- Theory and Application}.
\newblock Springer, New York, 2015.

\bibitem{MieRou16RIEF}
A.~Mielke and T.~Roub{\'\i}{\v{c}}ek.
\newblock Rate-independent elastoplasticity at finite strains and its numerical
  approximation.
\newblock {\it Math. Models Meth. Appl. Sci.}, 6:2203--2236, 2016.

\bibitem{MieRou20TKVR}
A.~Mielke and T.~Roub{\'\i}{\v{c}}ek.
\newblock Thermoviscoelasticity in {K}elvin-{V}oigt rheology at large strains.
\newblock {\it Arch. Ration. Mech. Anal.}, 238:1--45, 2020.


\bibitem{naghdi}
P.M. Naghdi.
\newblock A critical review of the state of finite plasticity.
\newblock {\it Z. Angew. Math. Phys.}, 41:315--394, 1990.

\bibitem{NauAlt07MCSA}
K.~Naumenko and H.~Altenbach.
\newblock {\it Modeling of Creep for Structural Analysis}.
\newblock Springer, Berlin, 2007.

\bibitem{Neff02KFIN}
P.~Neff.
\newblock On {K}orn's first inequality with non-constant coefficients.
\newblock {\it Proc. Royal Soc. Edinburgh}, 132A:221--243, 2002.

\bibitem{NefGhi16CIEP}
P.~Neff and I.-D. Ghiba.
\newblock Comparison of isotropic elasto-plastic models for the plastic metric
  tensor {$C_p=F^\top_p\!F_p$}.
\newblock In K.~Weinberg and A.~Pandolfi, editors, {\it Innovative Numerical
  Approaches for Multi-Field and Multi-Scale Problems}, pages 161--195,
  Switzerland, 2016. Springer.

 
\bibitem{Neff}
P.~Neff, K.~Che\l mi\' nski, and H.-D.~Alber. Notes on
strain gradient plasticity: finite strain covariant modelling and
global existence in the infinitesimal rate-independent
case. {\it Math. Models Methods Appl. Sci.}, 19(2):307--346, 2009. 


\bibitem{Pomp03KFIV}
W.~Pompe.
\newblock Korn's {F}irst {I}nequality with variable coefficients and its
  generalization.
\newblock {\it Comment. Math. Univ. Carolinae}, 44:57--70, 2003.

\bibitem{Roub??CHEC}
T.~Roub{\'{\i}}{\v{c}}ek.
\newblock Cahn-{H}illiard equation with capillarity in actual deforming
  configurations.
\newblock {\it Discrete Cont. Dynam. Syst. Ser. S}, on-line:
  DOI:10.3934/dcdss.2020303.

\bibitem{Roub13NPDE}
T.~Roub{\'\i}{\v{c}}ek.
\newblock {\it Nonlinear Partial Differential Equations with Applications}.
\newblock Birkh\"auser, Basel, 2nd edition, 2013.

\bibitem{RouSte19FTCS}
T.~Roub{\'{\i}}{\v{c}}ek and U.~Stefanelli.
\newblock Finite thermoelastoplasticity and creep under small elastic strain.
\newblock {\it Math. Mech. Solids}, 24:1161--1181, 2019.


\bibitem{Scala}
R. Scala, U. Stefanelli.
Linearization for finite plasticity under dislocation-density tensor
regularization. {\it Contin. Mech. Thermodyn.}, to appear, 2020.


\bibitem{SchDuv09CFI}
E.M. Schulson and P.~Duval.
\newblock {\it Creep and Fracture of Ice}.
\newblock Cambridge Univ. Press, Leiden, 2009.


\bibitem{compos}
U. Stefanelli.
Existence for dislocation-free finite plasticity. {\it
ESAIM Control Optim. Calc. Var.}, 25:21, 2019.


\end{thebibliography}

\end{document}

\COMMENT{gradient theories are e.g. completely ignored in:
P. M. Naghdi: A critical review of the state of finite plasticity
Zeitschrift für angewandte Mathematik und Physik
1990, Volume 41, Issue 3, pp 315--394
10.1007/BF00959986
????}

\COMMENT{MAYBE ALSO RELEVANT:
R.H.W. ten Thije , R. Akkerman, J. Hu\'etink,
Large deformation simulation of anisotropic material
using an updated Lagrangian finite element method,
Comput. Methods Appl. Mech. Engrg. 196 (2007) 3141--3150
}

\COMMENT{MAYBE ALSO RELEVANT:
Antonio Rodr\'{\i}guez‐Ferran
Agustí P\'rez‐Foguet
Antonio Huerta,
Arbitrary Lagrangian–Eulerian (ALE) formulation for hyperelastoplasticity
Intl. J. Numer. Meth. Engr. 53, 2002, 1831-1851
}

===== TRASH ===================

\begin{remark}[{\sl Local invertibility of $y$}]\upshape
As it stands, the deformation $y$ is not imposed
orientation-preserving, namely, ${\rm det}\nabla y>0$. Note that this
property implies local invertibility of $y$. In order to recover the
orientation-preservation one could modify assumption \eqref{ass-FM}
and ask $\FE$ to blow up as ${\rm det} (\nabla y \PP^{-1}) \to 0+$, as
in
\eqref{ass-plast-large-HD-growth}. This would however require to
replace the $\kappa_0$ gradient term in $\varPhi$ by a term in
$\nabla^2(\nabla y\PP^{-1})$ and to apply the 
the Healy-Kr\"omer argument \cite{HeaKro09IWSS} to $F_{{\rm el}}$.
\end{remark}